\documentclass[a4paper]{amsart}
\usepackage{amsfonts,amsmath,amssymb,amscd,amstext,amsbsy,latexsym,url,tikz-cd}
\newtheorem{thm}{Theorem}[section]
\newtheorem{cor}[thm]{Corollary}
\newtheorem{lem}[thm]{Lemma}
\newtheorem{prop}[thm]{Proposition}
\theoremstyle{definition}
\newtheorem{defn}[thm]{Definition}
\theoremstyle{remark}
\newtheorem{rem}[thm]{Remark}
\newtheorem{exm}[thm]{Example}
\numberwithin{equation}{section}

\newcommand{\Max}{\operatorname{Max}}
\newcommand{\Maxl}{\operatorname{{Max}_\ell}}
\newcommand{\Maxr}{\operatorname{{Max}_r}}

\newcommand{\rann}{\operatorname{r.ann}}
\newcommand{\End}{\operatorname{End}}
\newcommand{\Hom}{\operatorname{Hom}}
\newcommand{\Ker}{\operatorname{Ker}}
\newcommand{\Img}{\operatorname{Im}}
\newcommand{\I}{\mathbb{I}}
\newcommand{\E}{\mathbb{E}}

\begin{document}

\title{Similar submodules of projective modules}%
\author{Alborz Azarang}%
\keywords{maximal submodule, projective module, maximal right ideal, endomorphism ring}%
\subjclass[2020]{16D40}%

\maketitle

\centerline{Department of Mathematics, Faculty of Mathematical Sciences and Computer,}
\centerline{Shahid Chamran University
of Ahvaz, Ahvaz-Iran} \centerline{azarang@scu.ac.ir}
\centerline{ORCID ID: orcid.org/0000-0001-9598-2411}
\begin{abstract}
We introduce a similarity relation between submodules of a module $M$ over a ring $R$, extending the classical notion of similarity for right ideals. Focusing on (faithfully) projective modules, we establish a sharp lower bound for the number of maximal submodules: if $N$ is a maximal submodule of $M$, then either $N$ is fully invariant or $N$ is similar to at least $1+|S|$ distinct maximal submodules, where $S$ is the eigenring of $N$; in particular, $|\Max(M)|\geq 1+|S|\geq 3$ in the latter case. For projective modules, we construct a canonical one-to-one map from $\Max(M)$ into $\Maxr(\End_R(M))$. When $M$ is faithfully projective and $\End_R(M)$ is right Artinian, we prove that $M$ has finite length and decomposes into a direct sum of local summands. Conversely, if $M$ is a projective right $R$-module with finite length, then $E_E$ has finite length with $\ell(E_E)\leq \ell(M_R)$; moreover, if $M$ is a faithfully projective $R$-module, then $\ell(E_E)=\ell(M_R)$; conversely, if $\ell(E_E)=\ell(M_R)$ holds, then $M$ is slightly compressible. These results are applied to obtain lower bounds on the number of maximal one-sided ideals that are not two-sided, with explicit consequences for matrix rings over infinite algebras.
\end{abstract}
\section{Introduction}
\subsection{Motivation and Results}
It is well-known that a ring $R$ has a unique maximal right ideal if and only if $R$ has a unique maximal left ideals; a ring with this property is called a local ring, see \cite{lam}. In fact, it is not hard to see that if $\mathfrak{m}$ is the unique maximal right ideal of $R$, then $\mathfrak{m}$ is the unique maximal left ideal of $R$, and consequently $\mathfrak{m}$ contains all proper right/left ideals of $R$, i.e., $\mathfrak{m}$ is the largest proper right/left ideal of $R$. For projective modules (which are natural generalizations of free modules) the analogue of this property was investigated by Ware \cite{ware}, who asked whether a projective module with a unique maximal submodule must be local (i.e., that unique maximal submodule contains every proper submodule).  He succeed in giving a positive answer for special classes of rings, such as left Noetherian and commutative rings; in particular, in a commutative case, he proved that for a projective $R$-module $P$ the following are equivalent:
\begin{enumerate}
  \item $P$ is a local module.
  \item $P$ has a unique maximal submodule.
  \item There exists a maximal ideal $\mathfrak{m}$ of $R$ such that $P\cong R_{\mathfrak{m}}$, as $R$-modules.
\end{enumerate}   
Moreover, if $M$ is the unique maximal submodule of $P$, then $P/M\cong R/\mathfrak{m}$, as $R$-modules.\\

Recently Sato settled the question in full generality, see \cite{sato}.\\

Despite this progress, a systematic framework for relating the collection of maximal submodules of a projective module to the ideal structure of its endomorphism ring has remained undeveloped. In particular, the well-established theory of similarity of right ideals (see \cite{cohnfir}) with its rich connections to idealizers, eigenrings, and module categories has not been extended to submodules in a way that preserves its structural power.\\

This paper fills that gap. We define a similarity relation between submodules that directly generalizes the classical one for right ideals. Our main results show that this relation tightly links the lattice of submodules of a projective module $M$ to the right-ideal lattice of $\End_R(M)$. Specifically, we prove that if $N$ is a maximal submodule of $M$, then either $N$ is fully invariant or the similarity class of $N$ contains at least $1+|\mathbb{E}(N)|$ distinct maximal submodules, where $\mathbb{E}(N)$ is the eigenring of $N$. Consequently, $|\Max(M)| \ge 3$ unless all maximal submodules are fully invariant.\\

When $M$ is projective, the assignment $N \mapsto \Hom_R(M,N)$ gives a one-to-one correspondence between $\Max(M)$ and a subset of $\Maxr(\End_R(M))$. This correspondence not only provides a new proof that a projective module is local if and only if its endomorphism ring is local, but also allows us to transfer finiteness conditions from the endomorphism ring to the module. For instance, if $M$ is faithfully projective and $\End_R(M)$ is right Artinian, then $M$ has finite length and decomposes as a direct sum of finitely many local summands.\\

In the final part of the paper, we apply these module-theoretic results to ring theory itself. We obtain lower bounds on the number of maximal one-sided ideals that are not two-sided, and show that for an infinite division subring $D \subseteq R$ and \(n>1\), the matrix ring $\mathbb{M}_n(R)$ always possesses infinitely many maximal left/right ideals that are not ideals. These corollaries illustrate how the theory of similar submodules can shed new light on classical questions in noncommutative algebra.\\

The paper is organized as follows. Section~2 collects the necessary preliminaries on retractable, faithfully projective, and slightly compressible modules, together with the basic theory of similarity for right ideals and submodules. Section~3 contains our main theorems on the abundance of maximal submodules and the correspondence with maximal right ideals of the endomorphism ring. It also deduces the structural consequences for faithfully projective modules with Artinian endomorphism rings. Finally, we transfer these results to the setting of rings, yielding the applications mentioned above.

\subsection{Notation and definitions}
All rings in this paper are unital with $1 \neq 0$. Unless stated otherwise, modules are right unital $R$-modules, subrings are unital, and homomorphisms preserve the identity. For a ring $T$ and a right $T$-module $M$, let $\Maxr(T)$, $\Maxl(T)$, $\Max(T)$, $\Max(M)$, $\ell(M)$ and $\rann_T(M)$, denote the set of all maximal right ideals of $T$, the set of all maximal left ideals of $T$, the set of all maximal ideals of $T$, the set of all maximal submodules of $M$, the length of $M$ (i.e., length of a composition series for $M$) and the right annihilator of $M$ in $T$, respectively. If $T$ is a ring, then $\mathbb{M}_n(T)$ denote the ring of all $n\times n$ matrices over $T$. For two right $T$-modules $M, M'$, $\Hom_T(M,M')$ denotes the set of all $T$-module homomorphisms from $M$ to $M'$; in particular, $\End_T(M) = \Hom_T(M,M)$ is the endomorphism ring of $M$. It is straightforward to verify that if $N$ is a submodule of $M$, then $\Hom_T(M,N)$ and $\Hom_T(N,M)$ are respectively a right ideal and a left ideal of $\End_T(M)$. A submodule $N$ of $M$ is called fully invariant if $\phi(N) \subseteq N$ for every $\phi \in \End_T(M)$; in this case $\Hom_T(M,N)$ is a two-sided ideal of $\End_T(M)$. Let $M$ be a right $T$-module and $E:=\End_T(M)$. $M$ is called indecomposable (resp. strongly indecomposable) if $E$ has no nontrivial idempotents (resp. if $E$ is a local ring). An idempotent element $e$ of a ring $T$ is called primitive (resp. local), if $eT$ is indecomposable (resp. local) as right $T$-module, see \cite{lam}. If $M$ is an $R$-module and $I$ an index set, we write $M^{(I)} := \bigoplus_{i \in I} M_i$ with $M_i = M$ for all $i \in I$. When $|I| = n$, we also use $M^{(n)}$ or simply $M^n$. For a right ideal $A$ of a ring $T$, $\I(A)$ denotes the idealizer of $A$ in $T$, i.e., the largest subring of $T$ in which $A$ is a two-sided ideal: $\I(A) = \{ t \in T \mid tA \subseteq A \}$. (The same notation is used for left ideals.) The quotient ring $\E_T(A) := \I(A)/A$ is called the eigenring of $A$. A ring $T$ is called right (resp. left) quasi-duo if every maximal right (resp. left) ideal of $T$ is two-sided; $T$ is quasi-duo if it is both left and right quasi-duo (see \cite{lamq}). For further notation and definitions we refer to  \cite{lam} and \cite{macrob}.

\section{Preliminaries}
\subsection{Retractable, slightly compressible and faithfully projective modules}

Recall that $\Hom_R(M,-)$ is a left exact functor, that is, whenever 

$$\begin{tikzcd}
0 \arrow[r] & A \arrow[r, "\phi"] & B  \arrow[r, "\psi"] & C 
\end{tikzcd}$$ 

is an exact sequence of $R$-modules, then we have the following exact sequence of abelian groups:
 
$$\begin{tikzcd}
0 \arrow[r] & \Hom_R(M,A) \arrow[r, "\phi^*"] & \Hom_R(M,B)  \arrow[r, "\psi^*"] & \Hom_R(M,C) 
\end{tikzcd}$$

where $\phi^*(\alpha)=\phi \circ \alpha$ and $\psi^*(\beta)=\psi\circ \beta$. In particular, a module $M$ is projective if and only if for every short exact sequence of $R$-modules 
$\begin{tikzcd} 0 \arrow[r] & A \arrow[r, "\phi"] & B  \arrow[r, "\psi"] & C \arrow[r] & 0 \end{tikzcd}$, the induced sequence
$$\begin{tikzcd}
0 \arrow[r] & \Hom_R(M,A) \arrow[r, "\phi^*"] & \Hom_R(M,B)  \arrow[r, "\psi^*"] & \Hom_R(M,C) \arrow[r] & 0 
\end{tikzcd}$$ 
is also exact.\\

Because $\Hom_R(M,-)$ is left exact and $\Hom_R(M,N)$ is a right ideal of $E=\End_R(M)=\Hom_R(M,M)$ for any submodules $N$ of $M$, we deduce that if $A\subseteq B$ are submodules of $M$, then $\Hom_R(M,A)\subseteq \Hom_R(M,B)$ are right ideals of $E$.\\

A (nonzero) right $R$-module $M$ is called retractable if $\Hom_R(M,N)\neq 0$ for every nonzero submodule $N$ of $M$, see \cite{khuri}; such a module is also termed slightly compressible, see \cite{smith}. Smith proved that over a right FBN ring $R$, every nonzero module is slightly compressible (i.e., $R$ is a retractable ring) if and only if $\Hom_R(M,R/P)\neq 0$, for each associated prime ideal $P$ of $M$. He further showed that every nonzero projective module over a commutative Noetherian ring is slightly compressible, and provided an example of a projective right ideal that is not slightly compressible \cite[Example 1.2]{smith}.\\

An $R$-module $M$ is called faithfully projective if the functor $\Hom_R(M,-)$ is a faithfully exact functor, see \cite{ishika}. For a projective $R$-module $M$ the following are equivalent, see \cite[Theorem 2.2]{ishika}.

\begin{enumerate}
  \item $M$ is a faithfully projective $R$-module.
  \item $\Hom_R(M,A)\neq 0$ for every nonzero $R$-module $A$. 
  \item $\Hom_R(M,R/I)\neq 0$ for each proper right ideal $I$ of $R$.
  \item $\Hom_R(M,A/\mathfrak{m})\neq 0$ for each maximal right ideal $\mathfrak{m}$ of $R$.
\end{enumerate}

Every faithfully projective module is projective; the converse holds whenever $R$ is commutative or $P$ is a finitely generated module, see \cite[Proposition 2.3]{ishika}. Moreover, for a projective $R$-module $M$, then the following are equivalent, see \cite[Proposition 2.4]{ishika}.

\begin{enumerate}
  \item $M$ is faithfully projective $R$-module.
  \item There exists a natural number $n$ such that $M^n$ has a direct summand isomorphic to $R$.
  \item There exists an infinite index set $I$ such that $M^{(I)}$ is free.
\end{enumerate}

These observations yield the following auxiliary result.

\begin{prop}\label{t1}
Let $M$ be a faithfully projective  $R$-module and $E=\End_R(M)$. If $E$ is a right Artinian (resp. Noetherian) ring, then $M$ is Artinian (resp. Noetherian).
\end{prop}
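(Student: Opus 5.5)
The plan is to show that the assignment $N\mapsto \Hom_R(M,N)$ is a strictly order-preserving (injective) map from the lattice of submodules of $M$ into the lattice of right ideals of $E$. Once this holds, any strictly descending (resp. ascending) chain $N_1\supsetneq N_2\supsetneq\cdots$ of submodules of $M$ yields a strictly descending (resp. ascending) chain $\Hom_R(M,N_1)\supsetneq \Hom_R(M,N_2)\supsetneq\cdots$ of right ideals of $E$. Such a chain cannot be infinite when $E$ is right Artinian (resp. Noetherian), so $M$ inherits the chain condition.

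Monotonicity is already observed in the preliminaries: $A\subseteq B$ implies $\Hom_R(M,A)\subseteq\Hom_R(M,B)$. The work is strictness. Let $A\subsetneq B$ be submodules of $M$. Since $B/A\neq 0$ and $M$ is faithfully projective, condition (2) of the cited characterization \cite[Theorem 2.2]{ishika} gives a nonzero $g\in\Hom_R(M,B/A)$. Projectivity of $M$ applied to the surjection $\pi:B\to B/A$ lifts $g$ to some $f\in\Hom_R(M,B)$ with $\pi\circ f=g\neq 0$. Hence $f(M)\not\subseteq A$, so $f\in\Hom_R(M,B)\setminus\Hom_R(M,A)$ (viewing $f$ as an endomorphism of $M$ with image in $B$). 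Equivalently, one applies the exact sequence
\[
0\to\Hom_R(M,A)\to\Hom_R(M,B)\to\Hom_R(M,B/A)\to 0
\]
from the preliminaries, whose right term is nonzero.

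Nothing here seems a genuine obstacle. The only point needing care is that faithful projectivity is used in the form ``$\Hom_R(M,X)\neq0$ for every nonzero module $X$''. This applies to the arbitrary subquotient $B/A$ of $M$, not merely to cyclic modules $R/I$, so I would quote the equivalence (1)$\Leftrightarrow$(2) explicitly. One also identifies $\Hom_R(M,N)$ with the right ideal $\{\phi\in E:\phi(M)\subseteq N\}$, so that the chain lives in $E$. Both the Artinian and Noetherian cases then follow from the same injectivity argument.
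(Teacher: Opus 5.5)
Your proposal is correct and is essentially the paper's own proof. You obtain a nonzero $g\in\Hom_R(M,B/A)$ from faithful projectivity, lift it through $\pi:B\to B/A$ by projectivity, and conclude $\Hom_R(M,A)\subsetneq\Hom_R(M,B)$, so that strict chains of submodules of $M$ transfer to strict chains of right ideals of $E$.
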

\begin{proof}
It suffices to show that whenever $A\subsetneq B$ are submodules of $M$, then we have $\Hom_R(M,A)\subsetneq \Hom_R(M,B)$. Since $B/A\neq 0$ and $M$ is a faithfully projective $R$-module, there exists a nonzero $R$-module homomorphism $g\in \Hom_R(M,B/A)$. Consider the diagram
$$
\begin{tikzcd}
 & M \arrow[ld, dashed, "f" above] \arrow[d, "g"] & \\
 B \arrow[r,"\pi"] & B/A \arrow[r] & 0
\end{tikzcd}
$$
where $\pi(b)=b+A$, for each $b\in B$. By projectivey of $M$, there exists $f\in \Hom_R(M,B)$ such that $\pi\circ f=g$. If $f\in \Hom_R(M,A)$, then $f(M)\subseteq A$ and hence $\pi(f(A))=0$, that is, $g=0$, a contradiction. Thus $\Hom_R(M,A)\subsetneq \Hom_R(M,B)$.
\end{proof}

\subsection{Similar right ideals and submodules}
Let $T$ be a ring and $I$ and $J$ right ideals of $T$. We say that $I$ and $J$ are similar right ideals, denoted by $I\sim J$, if $T/I\cong T/J$ as right $T$-modules. For $a\in T$ define $(I:a):=\{x\in T\ |\ xa\in I\}$. It is straightforward to check that $I\sim J$ if and only if there exists $c\in T$ such that $I+cT=T$ and $J=(I:c)$, see \cite[Proposition 1.3.6]{cohnfir}. For a right ideal $I$ of $T$ we have the ring isomorphism $\mathbb{I}(I)/I\cong \End_T(T/I)$, see \cite[P. 14, 1.11]{macrob}. Consequently, if $I\sim J$ then $\mathbb{I}(I)/I\cong \mathbb{I}(J)/J$ as rings. Thus, up to ring isomorphism, the ring $\mathbb{I}(I)/I$ is uniquely determined by similarity class; in particular $|\mathbb{I}(I)/I|=|\mathbb{I}(J)/J|$. In other words, the cardinality of $\mathbb{I}(I)/I$ is invariant under similarity. Now, let $M$ be a maximal right ideal of $T$. The following facts are useful:
\begin{enumerate}
\item A maximal right ideal $N$ of $T$ is similar to $M$ if and only if $N=(M:c)$ for some $c\in T\setminus M$.
\item If $N$ is a (maximal) right ideal of $T$ similar to $M$, then $N$ is an ideal of $T$ if and only if $N=M$, which in turn holds if and only if $M$ is an ideal of $T$.
\end{enumerate}
To see the previous facts, note that $(1)$ follows directly from \cite[Proposition 1.3.6]{cohnfir}, because obviously $M+cT=T$. For $(2)$, assume that $N$ and $M$ are similar right ideals and that $N$ is an ideal. Since $T/N\cong T/M$ as right $T$-modules, we have $N=\rann_T(T/N)=\rann_T(T/M)\subseteq M$. Because $N$ is a maximal right ideal, the inclusion $N\subseteq M$ forces $N=M$. Hence $(2)$ holds. Consequently, if $M$ is a maximal right ideal of $T$ that is not an ideal of $T$, and if $[M]$ denotes the set of all (maximal) left ideals of $T$ similar to $M$, then by $(2)$ we have $[M]\subseteq \Maxr(T)\setminus \Max(T)$, i.e., every (maximal) right ideal in $[M]$ fail to be a two-ideal. Clearly, similarity is an equivalence relation on the set of all right ideals of $T$, and in particular on $\Maxr(T)$. From $(1)$ and $(2)$, it is evident that for a maximal right ideal $M$, we have $|[M]|=1$ (i.e., $[M]=\{M\}$) if and only if $M$ is an ideal of $T$ (and therefore in this case $M\in \Max(T)\cap \Maxl(T)$).\\

Let $M$ be a right $R$-module, $N$ be an $R$-submodule of $M$ and $E=\End_R(M)$. It is not hard to see that $A:=\Hom_R(M,N)=\{\beta\in E\ |\ \beta(M)\subseteq N\}$ is a right ideal of $E$; $\mathbb{I}(N):=\{\beta\in E\ |\ \beta(N)\subseteq N\}$ is a subring of $E$ which is called the idealizer of $N$ in $M$ over $R$. The ring $\mathbb{E}(N):=\mathbb{I}(N)/A$ is called the eigenring of $N$ in $M$ over $R$ (note that obviously $A$ is an ideal of $\mathbb{I}(N)$), see \cite{cohnfir}. Analogous to the definition of similarity for right ideals, we define similar submodules as follows.

\begin{defn}
Let $N$ and $N'$ be $R$-submodules of a right $R$-module $M$ over a ring $R$. Then we say that $N$ is similar to $N'$ whenever $M/N\cong M/N'$ as $R$-modules. If $N$ is similar to $N'$, we write $N\sim N'$ (note that obviously $\sim$ is an equivalence relation on the set of submodules of $M$).
\end{defn}  

\begin{defn}
Let $N$ be a submodule of an $R$-module $M$ and $\beta\in E:=\End_R(M)$. Then $(N:\beta):=\{m\in M\ |\ \beta(m)\in N\}$. It is not hard to see that $(N:\beta)$ is an $R$-submodule of $M$.
\end{defn}

Using the previous notation and definitions, we have the following immediate lemma, whose proof is routine and has been omitted. 

\begin{lem}\label{t2}
Let $N$ be a submodule of an $R$-module $M$, $A:=\Hom_R(M,N)$, and $\beta\in E=\End_R(M)$. The following hold:
\begin{enumerate}
  \item $(N:\beta)=M$ if and only if $\beta\in A$.
  \item $N\subseteq (N:\beta)$ if and only if $\beta\in\mathbb{I}(N)$.
  \item $N\subseteq (N:\beta)\subsetneq M$ if and only if $\beta\in\mathbb{I}(N)\setminus A$.
  \item If $N\in \Max(M)$, then $N=(N:\beta)$ if and only if $\beta\in\mathbb{I}(N)\setminus A$.
\end{enumerate}
\end{lem}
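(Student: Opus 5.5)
All four parts come from unwinding the definition $(N:\beta)=\{m\in M\mid \beta(m)\in N\}$. I will prove them in order, each using the previous ones, and the only step needing more than a one-line check is the use of maximality in (4).

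For (1): $(N:\beta)=M$ means $\beta(m)\in N$ for every $m\in M$. That is exactly $\beta(M)\subseteq N$, i.e. $\beta\in\Hom_R(M,N)=A$. For (2): $N\subseteq (N:\beta)$ means every $n\in N$ satisfies $\beta(n)\in N$. That is $\beta(N)\subseteq N$, i.e. $\beta\in\mathbb{I}(N)$ by the definition of the idealizer of $N$ in $M$. Part (3) is then the conjunction of (2) with the negation of (1), since $(N:\beta)\subsetneq M$ holds iff $\beta\notin A$. Note that $A\subseteq\mathbb{I}(N)$ (because $\beta(M)\subseteq N$ forces $\beta(N)\subseteq N$), so the set $\mathbb{I}(N)\setminus A$ makes sense as a difference inside $\mathbb{I}(N)$.

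For (4), assume $N\in\Max(M)$.
\begin{itemize}
\item[($\Rightarrow$)] If $N=(N:\beta)$, then in particular $N\subseteq(N:\beta)$, so $\beta\in\mathbb{I}(N)$ by (2). Also $(N:\beta)=N\neq M$ because maximal submodules are proper, so $\beta\notin A$ by (1).
\item[($\Leftarrow$)] If $\beta\in\mathbb{I}(N)\setminus A$, then (3) gives $N\subseteq(N:\beta)\subsetneq M$. Since $(N:\beta)$ is a submodule of $M$ (as noted after the definition) and $N$ is maximal, this forces $(N:\beta)=N$.
\end{itemize}

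None of these steps is a real obstacle. The only point needing care is remembering in (4) that $(N:\beta)$ is itself a submodule, so that maximality of $N$ can be applied to it.
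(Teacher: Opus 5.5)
Your proof is correct: each part is a direct unwinding of the definitions of $(N:\beta)$, $A=\Hom_R(M,N)$ and $\mathbb{I}(N)$, and in (4) you apply maximality of $N$ to the submodule $(N:\beta)$. The paper omits this proof as routine, and your argument is exactly the verification it leaves to the reader.
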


\begin{lem}\label{t3}
Let $N$ be a submodule of an $R$-module $M$ and $\beta\in E=\End_R(M)$. If $N+\beta(M)=M$, then $(N:\beta)\sim N$.
\end{lem}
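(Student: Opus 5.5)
We will prove Lemma~\ref{t3} by writing down an explicit isomorphism $M/(N:\beta)\cong M/N$, induced by $\beta$ followed by the canonical projection. Let $\pi\colon M\to M/N$ be the natural map $m\mapsto m+N$, and put
$$\phi:=\pi\circ\beta\colon M\to M/N,\qquad \phi(m)=\beta(m)+N .$$
Since $\beta$ and $\pi$ are $R$-linear, $\phi$ is an $R$-module homomorphism. The first isomorphism theorem will then do the work, once we check the kernel and the image.

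\emph{Kernel.} By definition,
$$\Ker\phi=\{m\in M\ |\ \beta(m)\in N\}=(N:\beta),$$
which is exactly the submodule in the statement. This also recovers the earlier remark that $(N:\beta)$ is a submodule, since kernels are submodules.

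\emph{Image.} The image of $\phi$ is $(\beta(M)+N)/N$. The hypothesis $N+\beta(M)=M$ says precisely that this is all of $M/N$. Concretely, given $m\in M$, write $m=n+\beta(m')$ with $n\in N$ and $m'\in M$; then $\phi(m')=\beta(m')+N=m+N$. So $\phi$ is surjective.

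By the first isomorphism theorem,
$$M/(N:\beta)\cong M/N$$
as right $R$-modules, i.e. $(N:\beta)\sim N$ by the definition of similar submodules.

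There is no genuine obstacle here; the only point needing care is that surjectivity uses the hypothesis $N+\beta(M)=M$ exactly. This mirrors Cohn's criterion for right ideals ($I+cT=T$ and $J=(I:c)$): taking $M=T_T$ and $\beta$ to be left multiplication by $c$ recovers the ring case.
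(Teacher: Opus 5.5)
Your proof is correct and is essentially the paper's own argument. The paper also defines $\phi(m)=\beta(m)+N$, uses $N+\beta(M)=M$ for surjectivity, identifies $\Ker(\phi)=(N:\beta)$, and concludes $M/(N:\beta)\cong M/N$; you additionally spell out the surjectivity step explicitly.
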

\begin{proof}
Define $\phi:M\longrightarrow M/N$ by $\phi(m)=\beta(m)+N$ for each $m\in M$. It is clear that $\phi$ is an $R$-module homomorphism. Since $N+\beta(M)=M$, we infer that $\phi$ is onto. It is not hard to see that $\Ker(\phi)=(N:\beta)$ and therefore $M/(N:\beta)\cong M/N$ as $R$-modules. Thus, $N\sim (N:\beta)$. 
\end{proof}

\section{Maximal submodules and projective modules}

We start this section by the following generalization of \cite[Proposition 1.3.6]{cohnfir} for projective modules.

\begin{prop}\label{t4}
Let $M$ be a projective $R$-module, $N$ and $N'$ be $R$-submodules of $M$. Then $N\sim N'$ if and only if there exists $\beta\in E=\End_R(M)$ such that $N'+\beta(M)=M$ and $N=(N':\beta)$.
\end{prop}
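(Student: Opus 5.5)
The reverse implication is already contained in Lemma~\ref{t3}. If $\beta\in E$ satisfies $N'+\beta(M)=M$, then Lemma~\ref{t3}, applied with $N'$ in place of $N$, gives $(N':\beta)\sim N'$. Since $N=(N':\beta)$, we get $N\sim N'$. Projectivity is not used in this direction.

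The forward implication is where projectivity of $M$ enters. Assume $N\sim N'$, so there is an $R$-isomorphism $\theta:M/N\to M/N'$. Let $\pi:M\to M/N$ and $\pi':M\to M/N'$ be the canonical epimorphisms. The composite $g:=\theta\circ\pi:M\to M/N'$ is an epimorphism with $\Ker(g)=N$, because $\theta$ is injective.

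Since $M$ is projective and $\pi'$ is onto, there is $\beta\in E$ with $\pi'\circ\beta=g$. We then check the two required conditions.

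First, $N'+\beta(M)=M$. Given $m\in M$, surjectivity of $g$ gives $x\in M$ with $g(x)=\pi'(m)$. Then $\pi'(\beta(x))=\pi'(m)$, so $m-\beta(x)\in N'$, and hence $m\in N'+\beta(M)$.

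Second, $(N':\beta)=N$. For $m\in M$ we have $\beta(m)\in N'$ iff $\pi'(\beta(m))=0$ iff $g(m)=0$ iff $m\in\Ker(g)=N$.

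This completes both directions. The only nontrivial step is producing $\beta$, which is exactly the lifting of $g$ along $\pi'$ supplied by projectivity; this mirrors the proof of Proposition~\ref{t1}. I expect no real obstacle beyond keeping track of which of $N$ and $N'$ appears in which role.
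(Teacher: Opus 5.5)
Your proof is correct and takes essentially the same approach as the paper. The converse follows from Lemma~\ref{t3}. For the forward direction, projectivity lifts $\theta\circ\pi$ along $\pi'$ to some $\beta\in E$; surjectivity of $\theta$ then gives $N'+\beta(M)=M$, and injectivity of $\theta$ gives $(N':\beta)=N$.
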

\begin{proof}
Assume that $N$ and $N'$ are similar and $\phi:M/N\longrightarrow M/N'$ is an $R$-module isomorphism. Let $i_1, i_2$ be the inclusion maps and $\pi_1,\pi_2$ be natural canonical maps, as in the following diagram. Since $M$ is a projective $R$-module, we conclude that there exists an $R$-module homomorphism $\alpha: M\longrightarrow M$ such that $\pi_2\circ\alpha=\phi\circ\pi_1$. Thus for each $m\in M$ we have $\alpha(m)+N'=\phi(m+N)$. Therefore $(\alpha(M)+N')/N'=\{\phi(m+N)\ |\ m\in M\}=M/N'$, because $\phi$ is an isomorphism. This immediately implies that $\alpha(M)+N'=M$. Now we prove that $N=(N':\alpha)$. To see this, note that $m\in (N':\alpha)$ if and only if $\alpha(m)\in N'$ if and only if $\phi(m+N)=\alpha(m)+N'=0$ if and only if $m\in N$, since $\phi$ is an isomorphism. The converse is evident by the previous lemma.   
$$
\begin{tikzcd}
0 \arrow[r] & N  \arrow[r, hook, "i_1"] & M \arrow[r, "\pi_1"] \arrow[d, dashed, "\alpha"] & \frac{M}{N} \arrow[r] \arrow[d, "\phi"] & 0 \\
0 \arrow[r] & N' \arrow[r, hook, "i_2"] & M  \arrow[r, "\pi_2"] & \frac{M}{N'} \arrow[r] & 0   
\end{tikzcd}
$$
\end{proof}

Now we present the following main result concerning the number of maximal submodules of an arbitrary module.

\begin{thm}\label{t5}
Let $M$ be an $R$-module, $E:=\End_R(M)$ and $N\in \Max(M)$. Then either $\mathbb{I}(N)=E$ (i.e., $N$ is fully invariant) or $N$ is similar to at least $1+|\mathbb{E}(N)|$ distinct maximal $R$-submodules of $M$. In particular, in the latter case, $|\Max(M)|\geq 1+|\mathbb{E}(N)|\geq 3$. 
\end{thm}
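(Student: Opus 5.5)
The plan is to manufacture the required maximal submodules as submodules of the form $(N:\beta)$ and to use Lemma~\ref{t3} to see that they are similar to $N$. Let $A:=\Hom_R(M,N)$. I would first record the basic dichotomy: for any $\beta\in E$, the submodule $\beta(M)+N$ contains $N$, and $N$ is maximal. Hence either $\beta(M)\subseteq N$, i.e.\ $\beta\in A$, or $\beta(M)+N=M$. In the second case Lemma~\ref{t3} gives $M/(N:\beta)\cong M/N$. Since $M/N$ is simple, $(N:\beta)$ is then a maximal submodule similar to $N$. So every $\beta\in E\setminus A$ yields a maximal submodule $(N:\beta)\sim N$.

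Now assume $\mathbb{I}(N)\neq E$ and fix $\gamma\in E\setminus\mathbb{I}(N)$, i.e.\ $\gamma(N)\not\subseteq N$. For each $s\in\mathbb{I}(N)$ put $\beta_s:=\gamma+s$ and $K_s:=(N:\beta_s)$. The claims, in order, are as follows.

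\begin{enumerate}
\item $\beta_s\notin\mathbb{I}(N)$. Otherwise $\gamma=\beta_s-s\in\mathbb{I}(N)$. In particular $\beta_s\notin A$, so by the dichotomy above $K_s$ is a maximal submodule similar to $N$.
\item $K_s\neq N$. By Lemma~\ref{t2}(2), $N\subseteq K_s$ would force $\beta_s\in\mathbb{I}(N)$.
\item $K_s$ depends only on the class $s+A$, and distinct classes give distinct submodules. Suppose $K_s=K_t=:K$ with $s,t\in\mathbb{I}(N)$. Then $\beta_s$ and $\beta_t$ both map $K$ into $N$, so $s-t=\beta_s-\beta_t$ maps $K$ into $N$. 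Also $s-t\in\mathbb{I}(N)$ maps $N$ into $N$. Since $K\neq N$ are both maximal, $K+N=M$, hence $(s-t)(M)\subseteq N$, i.e.\ $s-t\in A$. Conversely, if $s-t\in A$, then $\beta_s(m)\in N$ if and only if $\beta_t(m)\in N$, so $K_s=K_t$.
\end{enumerate}

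Together these give an injection $\mathbb{E}(N)=\mathbb{I}(N)/A\to\{\text{maximal submodules similar to }N\}\setminus\{N\}$. Adding $N$ itself yields at least $1+|\mathbb{E}(N)|$ distinct maximal submodules similar to $N$, which is the main assertion.

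For the final inequality, observe that $1_M\in\mathbb{I}(N)\setminus A$ because $N\neq M$. So $\mathbb{E}(N)$ is a nonzero ring with $0\neq1$, whence $|\mathbb{E}(N)|\ge2$ and $|\Max(M)|\ge1+|\mathbb{E}(N)|\ge3$.

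The only delicate step is the distinctness in item~3. It hinges on using both $K+N=M$ and $s-t\in\mathbb{I}(N)$; this is exactly why the translates are taken by elements of the idealizer rather than arbitrary elements of $E$.
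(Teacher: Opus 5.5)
Your proof is correct and follows essentially the same route as the paper: fix $\gamma\in E\setminus\mathbb{I}(N)$, send $s+A\mapsto (N:\gamma+s)$, and use Lemma~\ref{t3} to obtain maximal submodules similar to, but different from, $N$. The only difference is cosmetic and lies in the injectivity step: the paper shows $K\subseteq (N:s-t)=N$ via Lemma~\ref{t2}(4) and reaches a contradiction, whereas you use $K+N=M$ to conclude $(s-t)(M)\subseteq N$ directly, which is the same observation.
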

\begin{proof}
Assume that $N$ is not fully invariant in $M$, i.e., there exists $\alpha\in E\setminus\mathbb{I}(N)$ such that $\alpha(N)\nsubseteq N$. Thus $(N:\alpha)\neq N,M$. Hence by maximality of $N$ we conclude that $N+\alpha(M)=M$. From Lemma \ref{t3} we obtain that $(N:\alpha)\sim N$, i.e., $(N:\alpha)\in \Max(M)$. Clearly, for each $\beta\in \mathbb{I}(N)$ we have $\alpha+\beta\in E\setminus \mathbb{I}(N)$. Hence by the first part of the proof we conclude that $(N:\alpha+\beta)\sim N$ and thus $(N:\alpha+\beta)\in \Max(M)$. Now consider the map $\phi:\mathbb{I}(N)/A\longrightarrow \Max(M)$, which is defined by $\phi(\beta+A)=(N:\alpha+\beta)$, for each $\beta\in\mathbb{I}(N)$, where $A=\Hom_R(M,N)$ (It is not hard to see that $\phi$ is well-defined). By the first part of the proof note that $\Img(\phi)\subseteq \Max(M)\setminus\{N\}$. We claim that $\phi$ is one-one. Let $\beta_1,\beta_2\in\mathbb{I}(N)$ such that $\beta_1+A\neq \beta_2+A$, i.e., $\beta_1-\beta_2\in \mathbb{I}(N)\setminus A$. We show that $\phi(\beta_1+A)\neq \phi(\beta_2+A)$. Assume that $\phi(\beta_1+A)=\phi(\beta_2+A)$, i.e., $(N:\alpha+\beta_1)=(N:\alpha+\beta_2)$. Let $m\in(N:\alpha+\beta_1)=(N:\alpha+\beta_2)$, thus we deduce that $\alpha(m)+\beta_1(m)\in N$ and $\alpha(m)+\beta_2(m)\in N$. It follows that $\beta_1(m)-\beta_2(m)\in N$ and then $m\in (N:\beta_1-\beta_2)$. Since $\beta_1-\beta_2\in \mathbb{I}(N)\setminus A$ we have $(N:\beta_1-\beta_2)=N$. Hence $m\in N$ and consequently $(N:\alpha+\beta_1)=(N:\alpha+\beta_2)\subseteq N$. By maximality of $(N:\alpha+\beta_i)$, we deduce that $(N:\alpha+\beta_1)=(N:\alpha+\beta_2)=N$, which absurd by the first part of the proof (note $\alpha+\beta_i\in E\setminus \mathbb{I}(N)$). Thus $\phi$ is a one-one map from $\mathbb{E}(N)=\mathbb{I}(N)/A$ into $\Max(M)\setminus\{N\}$. It is clear that $N$ is similar to each $\phi(\beta+A)$, for any $\beta\in \mathbb{I}(N)$. Thus $N$ is similar to at least $1+|\mathbb{E}(N)|$ distinct maximal $R$-submodules of $M$. It is evident that $|\Max(M)|\geq 1+|\mathbb{E}(N)|$, and since $\mathbb{E}(N)$ is a ring we have $|\mathbb{E}(N)|\geq 2$, thus $|\Max(M)|\geq 3$.     
\end{proof}

For the next observation, we first need the following lemma.

\begin{lem}\label{t6}
Let $R$ be a $K$-algebra over a field $K$. Assume that $M$ is an $R$-module and $N$ is a proper submodule of $M$. Then $K$ naturally embeds into $\mathbb{E}(N)$.
\end{lem}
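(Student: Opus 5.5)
The plan is to write down the obvious map and check that it is an injective ring homomorphism. Since $R$ is a $K$-algebra, each $k\in K$ gives a scalar endomorphism $\lambda_k\colon M\to M$, $\lambda_k(m)=mk$. More precisely, $\lambda_k(m)=m\cdot(k1_R)$, and this is $R$-linear because $k1_R$ is central in $R$: $\lambda_k(mr)=mrk=mkr=\lambda_k(m)r$. The assignment $k\mapsto\lambda_k$ is a unital ring homomorphism $K\to E=\End_R(M)$. Each $\lambda_k$ lies in $\mathbb{I}(N)$, since $N$ is an $R$-submodule and so $Nk\subseteq N$. This gives a ring homomorphism
\[
\theta\colon K\longrightarrow \mathbb{I}(N)\longrightarrow \mathbb{I}(N)/A=\mathbb{E}(N),\qquad \theta(k)=\lambda_k+A,
\]
where $A=\Hom_R(M,N)$.

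Next I show that $\theta$ is injective. Since $K$ is a field, $\Ker\theta$ is an ideal of $K$, hence either $0$ or $K$. It therefore suffices to check that $\theta(1)\neq 0$, that is, that $\mathrm{id}_M=\lambda_1\notin A$. This is exactly where properness of $N$ is used: $\mathrm{id}_M\in A$ would mean $M=\mathrm{id}_M(M)\subseteq N$, contradicting $N\neq M$. (Equivalently, $\mathbb{E}(N)$ is a nonzero ring because $1\notin A$, and a unital ring map out of a field into a nonzero ring is injective.) Hence $K$ embeds in $\mathbb{E}(N)$, and the embedding is natural, coming from the central $K$-action.

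I do not expect a genuine obstacle. The only points needing care are two. First, $R$-linearity of $\lambda_k$ needs the image of $K$ to be central in $R$, which is part of the definition of a $K$-algebra. Second, if $M$ is merely unital, one should note that $\lambda_1=\mathrm{id}_M$, so that $\theta$ is unital.
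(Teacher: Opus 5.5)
Your proof is correct and follows the paper's argument. Both use the scalar maps $m\mapsto mk$, which lie in $\mathbb{I}(N)$, and both use properness of $N$ to keep $\mathrm{id}_M$ out of $A$. The paper phrases injectivity as $A\cap\Img(\phi)=0$, since nonzero scalars are units of $E$ and $A$ is a proper right ideal; you phrase it as $\ker\theta$ being an ideal of $K$ that does not contain $1$, which is the same observation.
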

\begin{proof}
For each $k\in K$, define $\beta_k:M\longrightarrow M$ by $\beta_k(m)=mk$, for every $m\in M$. Since $R$ is a $K$-algebra, it is clear that $\beta_k\in E:=\End_R(M)$. Define $\phi:K\longrightarrow E$, by $\phi(k)=\beta_k$, for any $k\in K$. It is easy to see that $\phi$ is a ring monomorphism. Because $N$ is an $R$-submodule of $M$, we have $\beta_k(N)=Nk\subseteq N$; hence $\Img(\phi)\subseteq \mathbb{I}(N)$. Clearly $\Img(\phi)\cong K$ as rings, and therefore $\Img(\phi)$ is a field. Since $N$ is a proper submodule of $M$, we conclude that $A=\Hom_R(M,N)$ is a proper right ideal of $E$. So $A\cap \Img(\phi)=0$. Consequently, $\Img(\phi)$ embeds naturally into $\mathbb{E}(N)=\mathbb{I}(N)/A$. Thus $\mathbb{E}(N)$ contains a copy of $K$.
\end{proof}

We now obtain the following corollary.

\begin{cor}\label{t7}
Let $R$ be a $K$-algebra over a field $K$. Assume that $M$ is an $R$-module and $N\in \Max(M)$. Then either $\mathbb{I}(N)=\End_R(M)$ or $|\Max(M)|\geq 1+|K|$.
\end{cor}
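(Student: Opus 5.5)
The corollary should follow by combining Theorem \ref{t5} with Lemma \ref{t6}.

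If $N$ is fully invariant, then $\mathbb{I}(N)=\End_R(M)$ and the first alternative holds. Otherwise, Theorem \ref{t5} gives $|\Max(M)|\geq 1+|\mathbb{E}(N)|$. Since $N\in\Max(M)$ is in particular a proper submodule of $M$, Lemma \ref{t6} applies. It provides an injective ring map $K\to\mathbb{E}(N)$, namely $k\mapsto\beta_k+A$ with $A=\Hom_R(M,N)$, so $|\mathbb{E}(N)|\geq |K|$. Chaining the two inequalities gives $|\Max(M)|\geq 1+|\mathbb{E}(N)|\geq 1+|K|$.

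The only point to check is that the embedding in Lemma \ref{t6} is genuinely injective, since that is what turns it into a cardinality bound. The lemma's argument covers this: $A$ is a proper right ideal, so $A\cap\Img(\phi)$ is a proper ideal of the field $\Img(\phi)\cong K$, hence zero. No other obstacle is expected. For infinite $K$ the conclusion reads $|\Max(M)|\geq|K|$, which is the form one wants for the later applications to matrix rings over infinite algebras.
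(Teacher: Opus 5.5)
Your proposal is correct and follows essentially the same approach as the paper. The paper also uses Lemma \ref{t6} to get $|\mathbb{E}(N)|\geq|K|$ and then Theorem \ref{t5} to conclude $|\Max(M)|\geq 1+|\mathbb{E}(N)|\geq 1+|K|$ when $N$ is not fully invariant.
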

\begin{proof}
By the previous lemma, $|\mathbb{E}(N)|\geq |K|$. The result now follows from Theorem \ref{t5}.
\end{proof}

In the next main result we see that the function $N\longmapsto \Hom_R(M,N)$ is a one-one function from $\Max(M)$ into $\Maxr(E)$, where $M$ is a projective $R$-module and $E=\End_R(M)$.

\begin{thm}\label{t8}
Let $M$  be a projective $R$-module and $N\in \Max(M)$. Then $A:=\Hom_R(M,N)\in \Maxr(E)$, where $E=\End_R(M)$. Moreover, the function $N\longmapsto \Hom_R(M,N)$ is a one-to-one function from $\Max(M)$ into $\Maxr(E)$. In other words, $|\Max(M)|\leq |\Maxr(E)|$.
\end{thm}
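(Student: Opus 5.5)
Throughout, $N\in\Max(M)$, $A=\Hom_R(M,N)$ and $\pi:M\to M/N$ is the canonical map. The plan has three steps: show $A$ is proper, show $A$ is maximal, and show $N\mapsto A$ is injective.

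\emph{Step 1: $A$ is proper.} Since $M/N$ is simple, $\pi\ne 0$, so $1_M\notin A$ and $A\ne E$.

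\emph{Step 2: $A$ is maximal.} I would take $\gamma\in E\setminus A$ and show $A+\gamma E=E$. Because $\gamma(M)\not\subseteq N$ and $N$ is maximal, $N+\gamma(M)=M$. The composite $\pi\circ\gamma:M\to M/N$ is then a surjection. By projectivity of $M$, I can lift $\pi:M\to M/N$ through $\pi\circ\gamma$. This gives $\delta\in E$ with $\pi\circ\gamma\circ\delta=\pi$. Hence $\pi\circ(1-\gamma\delta)=0$, that is, $(1-\gamma\delta)(M)\subseteq N$, so $1-\gamma\delta\in A$. It follows that $1=(1-\gamma\delta)+\gamma\delta\in A+\gamma E$, and $A$ is a maximal right ideal. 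This lifting step is the only real content of the theorem, and it is exactly where projectivity is needed.

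\emph{Step 3: injectivity.} Let $N\ne N'$ be maximal, with $A'=\Hom_R(M,N')$. Maximality gives $N\not\subseteq N'$, so $N+N'=M$. I would then use projectivity as in Proposition~\ref{t1}. The natural map $N\to M/N'$ is onto, so the surjection $\pi':M\to M/N'$ lifts to some $f:M\to N$ with $\pi'\circ f=\pi'$. Viewing $f$ as an element of $E$, we have $f\in A$. On the other hand, $f(M)\not\subseteq N'$, since otherwise $\pi'=\pi'\circ f=0$, which is false because $N'\ne M$. So $f\in A\setminus A'$, and therefore $A\ne A'$.

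Together, Steps 1–3 show that $N\mapsto\Hom_R(M,N)$ is a one-to-one map $\Max(M)\to\Maxr(E)$. This gives $|\Max(M)|\le|\Maxr(E)|$.
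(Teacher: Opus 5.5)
Your proof is correct and follows essentially the same argument as the paper. For maximality you lift $\pi$ through $\pi\circ\gamma$ to get $1_M-\gamma\delta\in A$. For injectivity you lift a canonical projection through the surjection from one maximal submodule onto the quotient by the other. The only cosmetic differences are that you check $A+\gamma E=E$ directly rather than taking an arbitrary right ideal $B\supsetneq A$, and that you swap the roles of $N$ and $N'$ in the injectivity step.
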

\begin{proof}
First we prove that $A$ is a maximal right ideal of $E$. It is clear that $A$ is a proper right ideal of $E$, because $1_M\notin A$. Hence assume that $B$ is a right ideal of $E$ which properly contains $A$. We prove that $B=E$. Let $\beta\in B\setminus A$, thus $\beta(M)\nsubseteq N$ and therefore by maximality of $N$ we deduce that $N+\beta(M)=M$. Define $\bar{\beta}:M\longrightarrow M/N$, by $\bar{\beta}(m)=\beta(m)+N$. Clearly $\bar{\beta}$ is an $R$-module homomorphism. Since $\beta(M)\nsubseteq N$ we infer that $\bar{\beta}\neq 0$. Also note that $\bar{\beta}$ is onto, because $M/N$ is a simple module. Hence we have the following diagram:
$$
\begin{tikzcd}
 & M \arrow[ld, dashed, "\alpha" above] \arrow[d, "\pi"] & \\
 M \arrow[r,"\bar{\beta}"] & M/N \arrow[r] & 0
\end{tikzcd}
$$
Since $M$ is a projective module we conclude that there exists an $R$-module homomorphism $\alpha\in E$ such that $\bar{\beta}\circ\alpha=\pi$. Hence for each $m\in M$ we have:
$$\beta(\alpha(m))+N=m+N=1_M(m)+N$$
Therefore $(1_M-\beta\alpha)(m)\in N$, for each $m\in M$. Thus $(1_M-\beta\alpha)(M)\subseteq N$, i.e., $1_M-\beta\alpha\in A\subseteq B$. Since $\beta\in B$ and $B$ is a right ideal of $E$, we deduce that $1_M\in B$ and thus $B=E$. Thus $A$ is a maximal right ideal of $E$.\\

Next we prove that the function $N\longmapsto \Hom_R(M,N)$ is a one-to-one function from $\Max(M)$ into $\Maxr(E)$. To see this, let $N$ and $N'$ be two distinct maximal submodules of $M$.
We prove that $\Hom_R(M,N)\neq \Hom_R(M,N')$. Obviously $M/N$ is a simple $R$-module, because $N$ is a maximal $R$-submodule of $M$. Since $N'\neq N$, we immediately obtain that the $R$-module homomorphism $g:N'\longrightarrow M/N$ which is defined by $g(x)=x+N$ is onto. Therefore we have the following diagram: 
$$
\begin{tikzcd}
 & M \arrow[ld, dashed, "f" above] \arrow[d, "\pi"] & \\
 N' \arrow[r,"g"] & M/N \arrow[r] & 0
\end{tikzcd}
$$
where $\pi(x)=x+N$, for each $x\in M$. By projectivy of $M$ we deduce that there exists an $R$-module homomorphism $f$ such that $gof=\pi$. Thus $f\in \Hom_R(M,N')$. If $f\in \Hom(M,N)$, then $\pi=0$, which is absurd. Thus $\Hom_R(M,N)\neq \Hom_R(M,N')$, as desird. The final part is evident now.
\end{proof}

Now we have the following interesting result of Ware.

\begin{thm}\label{t9}
Let $M$ be a projective $R$-module. Then $M$ is a local $R$-module if and only if $\End_R(M)$ is a local ring.
\end{thm}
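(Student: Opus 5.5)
Both directions reduce to Theorem \ref{t8} together with the standard facts about local rings and local modules. Recall that $M$ is local when it has a largest proper submodule, i.e. a maximal submodule $N$ containing every proper submodule. $E=\End_R(M)$ is local exactly when it has a unique maximal right ideal, equivalently when $\beta\notin J(E)$ forces $\beta$ to be a unit.

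\emph{($\Leftarrow$).} Suppose $E$ is local. We first need $\Max(M)\neq\emptyset$; this is the delicate point, since a projective module need not have maximal submodules in general. For a nonzero projective module, however, $\mathrm{Rad}(M)\neq M$ by Bass's theorem, so $M$ has a maximal submodule. (We assume $M\neq 0$, as $E$ is local.) By Theorem \ref{t8}, $N\mapsto\Hom_R(M,N)$ embeds $\Max(M)$ into $\Maxr(E)$, which is a singleton, so $M$ has a unique maximal submodule $N$. It remains to show that every proper submodule $L$ lies in $N$.

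Suppose instead $L\not\subseteq N$, so $L+N=M$. The map $L\to M/N$ is onto, and projectivity of $M$ lifts $\pi:M\to M/N$ to some $f\in\Hom_R(M,L)$ with $\pi f=\pi$. Then $1-f\in\Hom_R(M,N)=J(E)$, so $f$ is a unit of $E$. Hence $f(M)=M\subseteq L$, contradicting $L\neq M$. So $N$ is the largest proper submodule and $M$ is local.

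\emph{($\Rightarrow$).} Suppose $M$ is local with largest proper submodule $N$. Then $M=xR$ for any $x\notin N$, so $M$ is cyclic, and $M/N$ is simple. Let $\beta\in E\setminus \Hom_R(M,N)$. Then $\beta(M)\not\subseteq N$, so $\beta(M)=M$ because $N$ contains every proper submodule. Thus $\beta$ is surjective, and by projectivity it splits: $M=\Ker\beta\oplus C$. Since $M$ is local, its only direct summands are $0$ and $M$ (two complementary proper summands would both lie in $N$ and sum to $M$). As $C\cong M\neq 0$, we get $\Ker\beta=0$, so $\beta$ is an automorphism. Hence every element outside the proper right ideal $\Hom_R(M,N)$ is invertible, and $E$ is local with maximal ideal $\Hom_R(M,N)$, which is maximal by Theorem \ref{t8}.

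The main obstacle is the existence of a maximal submodule in the ($\Leftarrow$) direction. If citing Bass's theorem ($\mathrm{Rad}(P)\neq P$ for nonzero projective $P$) is disallowed, I would argue via a dual basis that $P\neq PJ(R)$. The remaining steps are routine applications of projectivity as in the proof of Theorem \ref{t8}.
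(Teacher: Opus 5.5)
Your proof is correct, and it is more self-contained than the paper's in both directions.

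For ($\Leftarrow$), the paper just says the claim is ``obvious'' from Theorem~\ref{t8}. That theorem only gives $|\Max(M)|\le 1$. Concluding that $M$ is local silently needs two further facts: that $\Max(M)\neq\emptyset$, and that a unique maximal submodule of a projective module is the largest proper one (Ware's question, settled by Sato). You supply both. Bass's theorem gives a maximal submodule, and your lifting argument shows directly that it contains every proper submodule. That argument only needs $1-f$, whose image lies in $N$, to be a non-unit of the local ring $E$, so Theorem~\ref{t8} is not really required in this direction.

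For ($\Rightarrow$), the paper invokes Sato's structure theorem to get $M\cong eR$ with $e$ primitive. It then upgrades $e$ to a local idempotent via Lam and uses $\End_R(eR)\cong eRe$. You instead show that every $\beta\notin\Hom_R(M,N)$ is surjective, hence split by projectivity, hence an automorphism because a local module is indecomposable. So the non-units of $E$ are exactly the proper right ideal $\Hom_R(M,N)$.

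The paper's route also yields the description $M\cong eR$ with $e$ a local idempotent, which it reuses in Theorem~\ref{t10}. Your remark that $M$ is cyclic recovers this cheaply: a cyclic projective module is isomorphic to $eR$ for some idempotent $e$, and then $eRe\cong E$ is local by your argument.
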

\begin{proof}
By previous theorem it is obvious that if $\End_R(M)$ is local, then $M$ is local too. Conversely, assume that $M$ is a local $R$-module. Thus by \cite[Theorem 4.4]{sato}, there exists an idempotent $e$ of $R$ such that $P\cong eR$ as $R$-modules, $L=eJ(R)$ is the largest submodule of $M$ and $eR$ is indecomposable, i.e., $e$ is a primitive idempotent (note, in the statement and the proof of \cite[Theorem 4.3]{sato}, $e$ is called local whenever $eR$ is indecomposable, but in \cite[Proposition 21.8]{lam}, $e$ is called primitive whenever $eR$ is indecomposable). Since $L=eJ(R)$ is the unique maximal submodule of $M$, by \cite[Proposition 21.18]{lam} we deduce that $e$ is a local idempotent of $R$, i.e., $eRe$ is a local ring. Obviously $\End_R(M)\cong \End_R(eR)$ as rings and since $\End_R(eR)\cong eRe$ as rings (see \cite[Corollary 21.7]{lam}), we obtain that $\End_R(M)$ is a local ring.
\end{proof}

Next we obtain the following main result.

\begin{thm}\label{t10}
Let $M$ be a faithfully projective $R$-module and $E:=\End_R(M)$. If $E_E$ has finite length (i.e., $E$ is a right artinian ring), then $M$ has finite length. Moreover, there exist natural numbers $k$, $n_1,\ldots, n_k$ and local (projective and strongly indecomposable) submodules $M_1,\ldots,M_k$ of $M$ such that $M\cong M_1^{n_1}\oplus\cdots\oplus M_k^{n_k}$. In fact, there exist local idempotents $e_1,\ldots, e_k$ of $R$ such that $M\cong (e_1R)^{n_1}\oplus\cdots\oplus (e_kR)^{n_k}$. In particular, these decompositions of $M$ are unique.
\end{thm}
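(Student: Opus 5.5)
The plan has three stages: get finite length, decompose the endomorphism ring and transport the decomposition to $M$, and then invoke Krull--Schmidt for uniqueness.

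\emph{Finite length.} Since $E$ is right Artinian, it is also right Noetherian by Hopkins--Levitzki. Proposition \ref{t1} then shows that $M$ is both Artinian and Noetherian. Hence $M$ has finite length.

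\emph{Decomposition via $E$.} A right Artinian ring is semiperfect. So we can write $1_M=f_1+\cdots+f_t$ as a sum of orthogonal local idempotents $f_i\in E$, with $f_iEf_i$ local for each $i$. This gives $M=f_1(M)\oplus\cdots\oplus f_t(M)$. Each summand $f_i(M)$ is projective, being a direct summand of $M$. We also have $\End_R(f_i(M))\cong f_iEf_i$, which is local, so $f_i(M)$ is strongly indecomposable. By Theorem \ref{t9}, each $f_i(M)$ is then a local module. Grouping isomorphic summands gives $M\cong M_1^{n_1}\oplus\cdots\oplus M_k^{n_k}$ with the $M_j$ pairwise non-isomorphic. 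Each $M_j$ is isomorphic to a submodule of $M$, namely the corresponding $f_i(M)$.

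\emph{Identifying $M_j$ with $e_jR$.} This is where the cited work is needed, mirroring the proof of Theorem \ref{t9}. Each $M_j$ is a projective local module, so \cite[Theorem 4.4]{sato} gives an idempotent $e_j\in R$ with $M_j\cong e_jR$ and $e_jJ(R)$ the unique maximal submodule. By \cite[Proposition 21.18]{lam}, $e_j$ is a local idempotent. It is worth noting an elementary route avoiding Sato in this setting. A projective module with a unique maximal submodule $L$ is cyclic: take any $x\notin L$, so $xR$ is not contained in any maximal submodule. Since $M_j$ has finite length, every proper submodule lies in a maximal one, so $xR=M_j$. A cyclic projective module is a direct summand of $R$, hence of the form $eR$.

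\emph{Uniqueness.} Every summand $M_j$ (equivalently $e_jR$) has a local endomorphism ring. The Krull--Schmidt--Azumaya theorem then gives uniqueness of the multiplicities $n_j$ and of the isomorphism classes of the $M_j$. Alternatively, one can apply classical Krull--Schmidt, since $M$ has finite length.

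The main obstacle is the step from a local endomorphism ring to a local module, which is exactly Theorem \ref{t9}, together with the identification $M_j\cong e_jR$. If needed, the finite-length cyclicity argument above makes that identification self-contained.
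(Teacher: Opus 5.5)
Your proof is correct. The first and last stages match the paper's: finite length via Proposition~\ref{t1}, with Hopkins--Levitzki made explicit, and uniqueness via Krull--Schmidt. The difference is how you produce the decomposition.

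\emph{The paper's route.} It works on the module side. Having shown that $M$ has finite length, it splits $M$ into finitely many indecomposable summands. It then uses Fitting's lemma to see that each indecomposable summand of finite length has a local endomorphism ring, and only then applies Theorem~\ref{t9} and Sato's theorem.

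\emph{Your route.} You work on the ring side. A right Artinian ring is semiperfect, so $1_M$ splits into orthogonal local idempotents $f_i$ of $E$. The local endomorphism rings $f_iEf_i\cong\End_R(f_i(M))$ then come for free, with no appeal to Fitting's lemma.

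\emph{What each buys.} In your approach, the decomposition into strongly indecomposable projective summands depends only on $E$ being semiperfect, not on the finite length of $M$. Uniqueness could then come from Krull--Schmidt--Azumaya alone. The paper's approach, once finite length is in hand, stays entirely within standard finite-length module theory and needs no idempotent lifting in $E$.

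Your elementary replacement for Sato's theorem is also a genuine simplification. A local module is generated by any element outside its largest proper submodule, so $M_j$ is cyclic projective and hence $M_j\cong e_jR$ for some idempotent $e_j$. You should add one line: $e_j$ is local because $e_jRe_j\cong\End_R(e_jR)\cong\End_R(M_j)$ is local. With that, the identification of the summands with $e_jR$ no longer depends on Sato's theorem.
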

\begin{proof}
It is clear that by Proposition \ref{t1}, $M_R$ has finite length. Hence $M$ decompose to a finite direct sum of indecomposable submodules. Thus we may assume that there exist mutually nonisomorphic indecomposable submodules $M_1,\ldots,M_k$ of $M$ such that $M\cong M_1^{n_1}\oplus\cdots\oplus M_k^{n_k}$, for some natural numbers $k, n_1,\ldots,n_k$. It is clear that each $M_i$ is projective and has finite length. Thus each $M_i$ is strongly indecomposable by \cite[Theorem 19.17]{lam}. In other words, $\End_R(M_i)$ is a local ring and therefore $M_i$ is a local module by the previous theorem. For the next part, note that since each $M_i$ is a local projective module, then by the proof of the previous theorem there exists a local idempotent $e_i$ of $R$ such that $M_i\cong e_iR$. The final part is evident by Krull-Schmidt Theorem, see \cite[Corollary 19.22]{lam}.
\end{proof}

In the next main result we prove the converse of the previous theorem.

\begin{thm}
Let $M$ be a projective right $R$-module and $E:=\End_R(M)$. If $M$ has finite length, then $E_E$ has finite length with $\ell(E_E)\leq \ell(M_R)$. Moreover, if $M$ is a faithfully projective $R$-module, then $\ell(E_E)=\ell(M_R)$; conversely, if $\ell(E_E)=\ell(M_R)$, then $M$ is slightly compressible. 
\end{thm}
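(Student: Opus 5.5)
The plan is to use the order-preserving map $\Phi:N\mapsto \Hom_R(M,N)$ from submodules of $M$ to right ideals of $E$, together with the inverse-type map $B\mapsto BM:=\sum_{\beta\in B}\beta(M)$. For $B\subseteq B'$ right ideals of $E$ we have $BM\subseteq B'M$, and for any right ideal $B$ we have $B\subseteq \Hom_R(M,BM)$. The inequality $\ell(E_E)\leq \ell(M_R)$ will follow if we show that $\Psi:B\mapsto BM$ is strictly order-preserving on chains of right ideals, i.e. $B\subsetneq B'$ implies $BM\subsetneq B'M$. Then any strictly increasing chain $0=B_0\subsetneq\cdots\subsetneq B_t=E$ yields a strictly increasing chain $0\subseteq B_1M\subsetneq\cdots\subsetneq B_tM=M$. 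Since $B_1M\neq 0$ when $B_1\neq 0$, the chain $0\subsetneq B_1M\subsetneq\cdots$ has length $t$, so $t\leq \ell(M)$. Hence $E_E$ has finite length, because every chain has length bounded by $\ell(M)$.

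The key step, and the main obstacle, is the injectivity $B=\Hom_R(M,BM)$ for every right ideal $B$. For this I would use projectivity and finite generation. $M$ has finite length, so it is finitely generated, and $BM$ is then a finitely generated submodule. I would write $BM=\beta_1(M)+\cdots+\beta_n(M)$ with $\beta_i\in B$; this is possible since $BM$ is a sum of images and finitely many suffice, by the Noetherian property. Given $f\in\Hom_R(M,BM)$, the map $(\beta_1,\ldots,\beta_n):M^n\to BM$ is surjective, so projectivity of $M$ lifts $f$ to $(\gamma_1,\ldots,\gamma_n):M\to M^n$ with $f=\sum\beta_i\gamma_i\in B$. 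Thus $\Hom_R(M,BM)=B$, and $\Psi$ is injective and order-preserving, which gives the first claim.

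For the faithfully projective case, I would establish the reverse inequality via the proof of Proposition \ref{t1}: there, $A\subsetneq B$ implies $\Hom_R(M,A)\subsetneq\Hom_R(M,B)$. Applying $\Phi$ to a composition series of $M$ therefore gives a strictly increasing chain in $E$ of length $\ell(M)$, so $\ell(E_E)\geq\ell(M_R)$.

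For the converse, suppose $\ell(E_E)=\ell(M_R)$ but $M$ is not slightly compressible, so some $0\neq N\leq M$ has $\Hom_R(M,N)=0$. I would take a composition series of $M$ through a simple submodule $S\subseteq N$; then $\Hom_R(M,S)=0$. A composition series $0=B_0\subsetneq\cdots\subsetneq B_t=E$ with $t=\ell(M)$ maps under $\Psi$ to a strictly increasing chain $0\subsetneq B_1M\subsetneq\cdots\subsetneq M$ of length $t$, which must therefore be a composition series of $M$. For each $j$, $\Hom_R(M,B_jM)=B_j$ by the key step. Every nonzero submodule $L=B_jM$ satisfies $\Hom_R(M,L)=B_j\neq0$, but this concerns only the submodules in this particular chain, so more work is needed. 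Instead I would argue by counting. By Jordan–Hölder any other chain, and in particular one through $S$, has length $t$, and $\Phi$ applied to it is nonstrict at the step $0\subseteq S$, since $\Phi(S)=0=\Phi(0)$. On the other hand, $\Phi(L)M\subseteq L$ and $\Psi\Phi$ is idempotent, so the image of $\Phi$ on a composition series of $M$ through $S$ is a chain in $E$. Refining it using finite length of $E$, and comparing with the injective map $\Psi$, the strict length would have to reach $t$ at the top. I expect this bookkeeping to need care; the cleanest route is to show $\Phi$ and $\Psi$ restrict to inverse lattice isomorphisms between right ideals of $E$ and submodules of the form $BM$. Under $\ell(E_E)=\ell(M_R)$, every submodule $L$ of $M$ has the form $BM$, since the lattice of trace-type submodules then has the full length. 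Hence $S=\Phi(S)M=0$, a contradiction.
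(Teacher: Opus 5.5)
Your proofs of $\ell(E_E)\le\ell(M_R)$ and of the faithfully projective case are correct. The second is the paper's argument via Proposition \ref{t1}. The first takes a different route: you show $B=\Hom_R(M,BM)$ for every right ideal $B$, so $B\mapsto BM$ is an order embedding of the right ideals of $E$ into the submodules of $M$. The paper instead shows directly that $\Hom_R(M,-)$ turns each composition-series step into either an equality or a simple step.

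The converse, however, has a genuine gap. Your decisive step is that, when $\ell(E_E)=\ell(M_R)$, every submodule of $M$ has the form $BM$ ``since the lattice of trace-type submodules then has the full length.'' This does not follow. The family $\mathcal{G}=\{BM\}$ is closed under sums and order-isomorphic to the right-ideal lattice of $E$. So full length only tells you that one composition series of $M$ lies in $\mathcal{G}$, and says nothing about submodules off that chain. Order-theoretically, $\{0,W_1,W_2,K^2\}$ is a join-closed family of full length in the subspace lattice of $K^2$, yet a third line $W_3$ contains no nonzero member of it.

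Your counting attempt fails for the same reason. The chain $\Hom_R(M,A_i)$ from a composition series through $S$ does repeat at $0\subseteq S$. But unless you know its strict steps have simple factors, refining it in $E$ can restore length $t$, and there is no contradiction.

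What is missing is the paper's first claim: if $A\subsetneq B$ with $B/A$ simple, then $\Hom_R(M,A)$ equals or is maximal in $\Hom_R(M,B)$. This follows from the same lifting trick you used. For $\beta\in\Hom_R(M,B)\setminus\Hom_R(M,A)$ one has $A+\beta(M)=B$, so $m\mapsto\beta(m)+A$ maps $M$ onto $B/A$. Lifting any $\alpha\in\Hom_R(M,B)$ through it gives $\alpha\in\Hom_R(M,A)+\beta E$. With this, the distinct terms of $\Hom_R(M,A_i)$ form a composition series of $E_E$ of length at most $t-1$, contradicting Jordan--H\"older.

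Your claim that every submodule is of the form $BM$ is in fact true under the hypothesis. But it holds only as a consequence of this covering property, which forces $\Hom_R(M,-)$ to be strictly increasing along every composition series, not because of the length count alone.
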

\begin{proof}
First we claim that whenever $A\subseteq B$ are submodules of $M$ and $A$ is maximal in $B$, then either $\Hom_R(M,A)=\Hom_R(M,B)$ or $\Hom_R(M,A)$ is maximal in $\Hom_R(M,B)$. To see this assume that $\Hom_R(M,A)\subsetneq \Hom_R(M,B)$. To show that $\Hom_R(M,A)$ is maximal in $\Hom_R(M,B)$, it suffices to prove that for each $\beta\in \Hom_R(M,B)\setminus \Hom_R(M,A)$, we have $\Hom_R(M,A)+\beta E=\Hom_R(M,B)$. Since $\beta\in \Hom_R(M,B)\setminus \Hom_R(M,A)$, we have $\beta(M)\nsubseteq A$ and therefore by maximality of $A$ in $B$ we infer that $A+\beta(M)=B$. Hence $\bar{\beta}(b)=\beta(b)+A$ is an $R$-module epimorphism from $M$ into $B/A$. Now, let $\alpha\in \Hom_R(M,B)$ and consider the $R$-module homomorphism $\bar{\alpha}(m)=\alpha(m)+A$ from $M$ into $B/A$. Consequently, we have the following diagram:  
$$
\begin{tikzcd}
 & M \arrow[ld, dashed, "f" above] \arrow[d, "\bar{\alpha}"] & \\
 M \arrow[r,"\bar{\beta}"] & B/A \arrow[r] & 0
\end{tikzcd}
$$
Since $M$ is a projective $R$-module, it follows that there exists an $R$-module homomorphism $f\in E$ such that $\bar{\beta}\circ f=\bar{\alpha}$. For each $m\in M$ we have $\beta(f(m))-\alpha(m)\in A$. In other words, $\beta\circ f-\alpha=\gamma\in \Hom_R(M,A)$. This means that $\alpha=\gamma+\beta f\in \Hom_R(M,A)+\beta E$. So $\Hom_R(M,B)=\Hom_R(M,A)+\beta E$, as desird. This immediately implies that whenever $0=A_0\subsetneq A_1\subsetneq A_2\subsetneq\cdots\subsetneq A_{n-1}\subsetneq A_n=M$ is a composition series for $M$, then from the chain
$$0=\Hom_R(M,A_0)\subseteq \Hom_R(M,A_1)\subseteq \Hom_R(M,A_2)\subseteq\cdots\subseteq \Hom_R(M,A_{n-1})\subseteq \Hom_R(M,A_n)=E$$
we obtain a composition series for $E$ with $\ell(E_E)\leq n=\ell(M)$. For the next part, note that when $M$ is a fiathfully projective $R$-module then by Proposition \ref{t1}, $\Hom_R(M,A_{i})\subsetneq \Hom_R(M,A_{i+1})$ and therefore the equality holds. Conversely, suppose that $\ell(E_E)=\ell(M)=n$ and consider a nonzero submodule $A$ of $M$. Now the chain $0\subset A\subseteq M$ can be inlarged to a composition series $0=A_0\subsetneq A_1\subsetneq A_2\subsetneq\cdots\subsetneq A_{n-1}\subsetneq A_n=M$, where for some $i\geq 1$ we have $A_i=A$. If $\Hom(M,A)=0$, then by the previous construction we obtain a composition series for $E_E$ of length $\leq n-1$, which is impossible. Consequently, $\Hom_R(M,A)\neq 0$ and therefore $M$ is slightly compressible.
\end{proof}

By the first part of the proof of the previous theorem we obtain the following quick corollary.

\begin{cor}
Let $M$ be a projective right $R$-module with finite length. Then for each simple $R$-submodule $A$ of $M$ either $\Hom_R(M,A)=0$ or $\Hom_R(M,A)$ is a minimal right ideal of $E$.
\end{cor}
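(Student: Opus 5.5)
The plan is to apply the claim from the first part of the proof of the preceding theorem to the pair $0\subseteq A$. That claim says: whenever $A'\subseteq B$ are submodules of $M$ with $A'$ maximal in $B$, either $\Hom_R(M,A')=\Hom_R(M,B)$ or $\Hom_R(M,A')$ is a maximal $E$-submodule of $\Hom_R(M,B)$. Its proof uses only the projectivity of $M$, not the finite length hypothesis.

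Let $A$ be a simple submodule of $M$. Then $0$ is a maximal submodule of $A$, so the claim applies with $A'=0$ and $B=A$. Since $\Hom_R(M,0)=0$, there are two cases. If $\Hom_R(M,A)=0$, we are in the first alternative of the statement. Otherwise $0$ is maximal among the right ideals of $E$ contained in $\Hom_R(M,A)$, which means the only right ideals of $E$ inside $\Hom_R(M,A)$ are $0$ and $\Hom_R(M,A)$. Together with $\Hom_R(M,A)\neq 0$, this says exactly that $\Hom_R(M,A)$ is a minimal right ideal of $E$.

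For a self-contained argument, I would reprove the key step directly. Take any nonzero $\beta\in\Hom_R(M,A)$. Since $A$ is simple and $\beta(M)$ is a nonzero submodule of $A$, we get $\beta(M)=A$, so $\beta:M\to A$ is onto. Given any $\alpha\in\Hom_R(M,A)$, projectivity of $M$ lifts $\alpha$ through the epimorphism $\beta$, giving $f\in E$ with $\beta f=\alpha$. Hence $\beta E=\Hom_R(M,A)$ for every nonzero $\beta$ in it, which is minimality.

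There is no real obstacle here. The only point to check is that the earlier claim is used with $A_0=0$, which is legitimate because simplicity of $A$ is exactly maximality of $0$ in $A$. The finite length hypothesis is not actually needed.
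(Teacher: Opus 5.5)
Your proposal is correct and matches the paper's route: the corollary is obtained by applying the claim from the first part of the preceding theorem's proof to the pair $0\subseteq A$, using that simplicity of $A$ means $0$ is maximal in $A$. Your direct lifting argument showing $\beta E=\Hom_R(M,A)$ for every nonzero $\beta$ is sound, and you are right that only projectivity is used, not finite length.
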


We now study how similarity of submodules of a projective $R$-module $M$ relates to similarity of their corresponding right ideals in $E=\End_R(M)$. To begin, we compare the idealizer of a maximal submodule $N$ of $M$ with the idealizer of $\Hom_R(M,N)$ in $E$. 

\begin{prop}\label{t11}
Let $M$ be a projective $R$-module and $N\in \Max(M)$. Then $\mathbb{I}(N)=\mathbb{I}(A)$, where $A=\Hom_R(M,N)$. 
\end{prop}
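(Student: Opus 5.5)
We prove the two inclusions separately. Here $\mathbb{I}(N)=\{\beta\in E\mid \beta(N)\subseteq N\}$ and $\mathbb{I}(A)=\{\beta\in E\mid \beta A\subseteq A\}$.

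\emph{The inclusion $\mathbb{I}(N)\subseteq\mathbb{I}(A)$} is immediate and needs neither projectivity nor maximality. If $\beta(N)\subseteq N$ and $\gamma\in A$, then $\gamma(M)\subseteq N$. Hence $\beta\gamma(M)\subseteq\beta(N)\subseteq N$, so $\beta\gamma\in A$.

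\emph{For the reverse inclusion} we argue by contraposition. Let $\beta\in E\setminus\mathbb{I}(N)$; we must produce some $\gamma\in A$ with $\beta\gamma\notin A$.
\begin{enumerate}
\item Since $\beta\notin\mathbb{I}(N)$, there is $n\in N$ with $\beta(n)\notin N$.
\item Consider the composite $\pi\circ\beta|_N\colon N\to M/N$, where $\pi\colon M\to M/N$ is the canonical map. It is nonzero, and $M/N$ is simple, so it is surjective.
\item By projectivity of $M$, lift $\pi\colon M\to M/N$ along the epimorphism $\pi\circ\beta|_N\colon N\to M/N$. This gives $f\colon M\to N$ with $\pi\beta f=\pi$.
\item Set $\gamma=i\circ f$, where $i\colon N\to M$ is the inclusion. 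Then $\gamma(M)\subseteq N$, so $\gamma\in A$.
\item For every $m\in M$ we have $\beta\gamma(m)+N=m+N$. Since $N\neq M$, some $m$ has $\beta\gamma(m)\notin N$, so $\beta\gamma\notin A$.
\end{enumerate}
Hence $\beta\notin\mathbb{I}(A)$, which gives $\mathbb{I}(A)\subseteq\mathbb{I}(N)$.

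The only real step is the lifting in (3). Its key point is that the epimorphism is taken with domain $N$, not $M$, which forces the lift to land inside $N$. This is the same device used in the injectivity part of Theorem~\ref{t8}, with $N'$ replaced by $N$ and the map $\pi\circ\beta|_N$ in place of $g$. Maximality of $N$ is needed only to guarantee surjectivity in (2).
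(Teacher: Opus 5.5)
Your proof is correct and follows essentially the same route as the paper: the easy inclusion $\mathbb{I}(N)\subseteq\mathbb{I}(A)$, then projectivity of $M$ to lift $\pi$ along the epimorphism $N\to M/N$, $n\mapsto \beta(n)+N$, which yields an element of $A$ that $\beta$ carries outside $A$. The only cosmetic difference is that the paper argues by contradiction ($1_M-g\alpha\in A$ together with $g\alpha\in A$ forces $1_M\in A$), whereas you observe directly that $\beta\gamma$, which agrees with $1_M$ modulo $N$, cannot lie in $A$ because $N\neq M$.
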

\begin{proof}
First note that since $A$ is an ideal of $\mathbb{I}(N)$, we immediately conclude that $\mathbb{I}(N)\subseteq \mathbb{I}(A)$. Conversely, let $g\in\mathbb{I}(A)$, i.e., $gA\subseteq A$. Therefore for each $\alpha\in A$ we have $g\circ\alpha\in A$. We must prove that $g\in\mathbb{I}(N)$, i.e., $g(N)\subseteq N$. Thus assume that $g(N)\nsubseteq N$. By maximality of $N$ we obtain that $N+g(N)=M$. Define $\bar{g}:N\longrightarrow M/N$, by $\bar{g}(n)=g(n)+N$. It is clear that $\bar{g}\neq 0$, because $g(N)\nsubseteq N$. Since $M/N$ is a simple $R$-module we deduce that $\bar{g}$ is onto. Consequently, we have the following diagram:
$$
\begin{tikzcd}
 & M \arrow[ld, dashed, "\alpha" above] \arrow[d, "\pi"] & \\
 N \arrow[r,"\bar{g}"] & M/N \arrow[r] & 0
\end{tikzcd}
$$
where $\pi(m)=m+N$, for each $m\in M$. Since $M$ is a projective $R$-module, we infer that there exists an $R$-module homomorphism $\alpha\in \Hom_R(M,N)=A$ such that $\bar{g}\circ\alpha=\pi$. So, for each $m\in M$, we obtain that $g(\alpha(m))+N=m+N=1_M(m)+N$. This implies that $(1_M-g\circ\alpha)(M)\subseteq N$, i.e., $1_M-g\circ\alpha\in A$. Because $\alpha\in A$ and $g\in\mathbb{I}(A)$, we deduce that $g\circ\alpha\in A$ and then $1_M\in A$, which is imposible. Consequently $g(N)\subseteq N$, i.e., $g\in\mathbb{I}(N)$. Hence $\mathbb{I}(A)\subseteq \mathbb{I}(N)$ and therefore $\mathbb{I}(A)=\mathbb{I}(N)$. 
\end{proof}

In what follows, we show that similar submodules of a projective module $M$ correspond to similar right ideals in the endomorphism ring of $M$.

\begin{prop}\label{tsa1}
Let $M$ be a projective $R$-module and $E=\End_R(M)$. If $N$ and $N'$ are similar $R$-submodules of $M$, then $\Hom_R(M,N)$ and $\Hom_R(M,N')$ are similar right ideals of $E$.
\end{prop}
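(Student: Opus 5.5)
The plan is to apply $\Hom_R(M,-)$ to the isomorphism $M/N\cong M/N'$ and use projectivity of $M$ to identify $E/\Hom_R(M,N)$ with $\Hom_R(M,M/N)$ as right $E$-modules. Write $A=\Hom_R(M,N)$ and $A'=\Hom_R(M,N')$.

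First, for any submodule $L$ of $M$, the short exact sequence $0\to L\to M\to M/L\to 0$ stays exact after applying $\Hom_R(M,-)$, by projectivity of $M$. This gives an exact sequence of abelian groups
$$0\to \Hom_R(M,L)\to E\xrightarrow{\pi_L^*}\Hom_R(M,M/L)\to 0.$$
Here $\Hom_R(M,M/L)$ is a right $E$-module via precomposition, $f\cdot\beta=f\circ\beta$. The map $\pi_L^*(\beta)=\pi_L\circ\beta$ is $E$-linear, since $\pi_L\circ(\beta\gamma)=(\pi_L\circ\beta)\circ\gamma$. Hence $E/\Hom_R(M,L)\cong \Hom_R(M,M/L)$ as right $E$-modules. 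The only nontrivial input is surjectivity of $\pi_L^*$, which is exactly projectivity of $M$.

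Second, let $\phi:M/N\to M/N'$ be an $R$-isomorphism. Postcomposition $f\mapsto \phi\circ f$ gives a map $\Hom_R(M,M/N)\to \Hom_R(M,M/N')$. It is additive and commutes with precomposition by elements of $E$, so it is right $E$-linear. It is bijective, with inverse given by postcomposition with $\phi^{-1}$. Combining this with the first step yields
$$E/A\cong \Hom_R(M,M/N)\cong \Hom_R(M,M/N')\cong E/A'$$
as right $E$-modules, which is precisely $A\sim A'$.

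An alternative, more explicit route is to use Proposition \ref{t4}. This gives $\beta\in E$ with $N'+\beta(M)=M$ and $N=(N':\beta)$. One then checks that $A+\beta E=E$ via a lifting argument as in Theorem \ref{t8}, and that $A=(A':\beta)$, and concludes by \cite[Proposition 1.3.6]{cohnfir}. The labeling needs care here: the cited criterion as stated produces $(I:c)$ from $I$, so one should take $I=A'$ and $c=\beta$ and verify $A'+\beta E=E$.

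The step I expect to be the main obstacle is keeping the module structures straight, namely confirming that the $E$-action on $\Hom_R(M,M/L)$ makes $\pi_L^*$ and postcomposition by $\phi$ right $E$-homomorphisms. Since both are given by composition on opposite sides, this reduces to associativity of composition, so the functorial first approach should go through cleanly. I will present that one.
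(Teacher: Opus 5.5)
Your proof is correct, but it takes a different route from the paper. The paper's proof is essentially your ``alternative'' one. It starts from the $\beta$ of Proposition \ref{t4}, with $N'+\beta(M)=M$ and $N=(N':\beta)$. It then verifies directly that $\Hom_R(M,N)=\{\alpha\in E\mid \beta\circ\alpha\in\Hom_R(M,N')\}$. Next, for each $\alpha\in E$, it lifts $m\mapsto\alpha(m)+N'$ through the epimorphism $m\mapsto\beta(m)+N'$ to get $E=\Hom_R(M,N')+\beta E$; so the comaximality needed is indeed $A'+\beta E=E$, as you eventually said, not $A+\beta E=E$. Finally it applies the comaximal-transform criterion for right ideals.

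You instead reach the definition $E/A\cong E/A'$ directly. You use the natural right $E$-isomorphism $E/\Hom_R(M,L)\cong\Hom_R(M,M/L)$, where projectivity is needed only for surjectivity of $\pi_L^*$, and then functoriality of $\Hom_R(M,-)$ applied to $\phi$. This is shorter. It avoids Proposition \ref{t4} altogether, and it avoids the colon-ideal bookkeeping: on which side $\beta$ multiplies, and which of the two ideals gets transformed.

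The paper's element-wise version buys an explicit common witness: the same $\beta$ realizes both $N\sim N'$ and $A\sim A'$. Corollary \ref{tsa5} runs exactly this backwards, via Lemma \ref{tsa3} and the generator property, to get the converse for faithfully projective $M$.

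Your version, in turn, makes plain what the converse requires. An isomorphism $\Hom_R(M,M/N)\cong\Hom_R(M,M/N')$ of right $E$-modules must force $M/N\cong M/N'$. This is precisely what fails in Example \ref{tsa2}. There $\Hom_R(eR,eR/B)$ and $\Hom_R(eR,eR)$ are both free of rank one over $\End_R(eR)$, yet $eR/B\not\cong eR$.
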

\begin{proof}
Since $N$ and $N'$ are similar submodules of $M$, by Proposition \ref{t4} we infer that there exists an $R$-module endomorphism $\beta$ of $M$ such that $N=N'+\beta(M)$ and $N=(N':\beta)$. We claim that:
\begin{enumerate}
\item $\Hom_R(M,N)=(\Hom_R(M,N'):\beta)$.
\item $E=\Hom_R(M,N')+\beta E$.
\end{enumerate}
For $(1)$, note that $\alpha\in \Hom_R(M,N)$ $\Longleftrightarrow$ $\alpha(m)\in N=(N':\beta)$ for each $m\in M$ $\Longleftrightarrow$ $\beta(\alpha(m))\in N'$ for each $m\in M$ $\Longleftrightarrow$ $\beta\circ\alpha\in \Hom_R(M,N')$ $\Longleftrightarrow$ $\alpha\in (\Hom_R(M,N'):\beta)$. Consequently $(1)$ holds.\\

To see $(2)$, it is clear that $\Hom_R(M,N')+\beta E\subseteq E$. Conversely, assume that $\alpha\in E$ and define $\bar{\alpha}(m)=\alpha(m)+N'$ for each $m\in M$, then clearly $\bar{\alpha}:M\rightarrow M/N'$ is an $R$-module homomorphism. Also note that $\bar{\beta}(m)=\beta(m)+N'$, for each $m\in M$, is an $R$-module epimorphism from $M$ onto $M/N'$, because $M=N'+\beta(M)$. Now since $M$ is a projective $R$-module, we infer that there exists an $R$-module homomorphism $f\in E$ such that $\bar{\beta}\circ f=\bar{\alpha}$. Consequently, for each $m\in M$ we have $(\beta\circ f-\alpha)(m)\in N'$. This implies that $\gamma:=\beta\circ f-\alpha\in \Hom_R(M,N')$. It follows that $\alpha=\gamma+\beta f\in \Hom_R(M,N')+\beta E$, as desired.\\

Thus by Proposition \ref{t4}, we conclude that $\Hom_R(M,N)$ and $\Hom_R(M,N')$ are similar right ideals of $E$.
\end{proof}

As the following example shows, the converse of the previous proposition does not hold.

\begin{exm}\label{tsa2}
We use a special case of \cite[Example 1.2]{smith}. Let $K$ be a field and $R=\begin{pmatrix}
  K & K  \\
  0 & K
\end{pmatrix}$, $A=\begin{pmatrix}
  K & K  \\
  0 & 0
\end{pmatrix}$ and $B=\begin{pmatrix}
  0 & K  \\
  0 & 0
\end{pmatrix}$. It is clear that for the idempotent $e=\begin{pmatrix}
  1 & 0  \\
  0 & 0
\end{pmatrix}$ we have $A=eR$ and therefore $A$ is a projective cyclic right $R$-module. Now by \cite[Example 1.2]{smith}, $\Hom_R(A,B)=0=\Hom_R(A,0)$ and therefore $\Hom_R(A,B)$ and $\Hom_R(A,0)$ are similar right ideals of $E=\End_R(A)$. But $B$ and $0$ are not similar $R$-submodules of $A$, because $A/0$ is not isomorphic to $A/B$ as right $R$-modules (note $A/B$ is a simple right $R$-module but $A$ is not simple).   
\end{exm}

Next we want to show that the converse of the previous proposition holds for faithfully projective modules. First we need some observation. We begin by the following lemma.

\begin{lem}\label{tsa3}
Let $M$ be an $R$-module and $\beta\in E=\End_R(M)$. Assume that $N$ and $N'$ are $R$-submodules of $M$, then the following hold:
\begin{enumerate}
\item If $E=\Hom_R(M,N')+\beta E$, then $M=N'+\beta(M)$.
\item If $\Hom_R(M,N)=(\Hom_R(M,N'):\beta)$, then $\Hom_R(M,N)=\Hom_R(M,(N':\beta))$.
\end{enumerate}
\end{lem}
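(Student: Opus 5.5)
Both parts are direct verifications from the definitions and use no projectivity; the only ingredient is that $1_M\in E$ and that elements of $E$ act on $M$ by evaluation.

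For (1), assume $E=\Hom_R(M,N')+\beta E$. Writing the identity in this form gives $1_M=\gamma+\beta\circ f$ for some $\gamma\in\Hom_R(M,N')$ and $f\in E$. Evaluating at an arbitrary $m\in M$ yields
\[
m=\gamma(m)+\beta(f(m)),
\]
where $\gamma(m)\in N'$ and $\beta(f(m))\in\beta(M)$. Hence $M\subseteq N'+\beta(M)$, and the reverse inclusion is trivial. So $M=N'+\beta(M)$.

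For (2), I will first prove the identity
\[
(\Hom_R(M,N'):\beta)=\Hom_R(M,(N':\beta))
\]
for arbitrary $N'$ and $\beta$, exactly as in claim (1) of the proof of Proposition \ref{tsa1}. For $\alpha\in E$ we have $\alpha\in(\Hom_R(M,N'):\beta)$ if and only if $\beta\circ\alpha\in\Hom_R(M,N')$. This holds if and only if $\beta(\alpha(m))\in N'$ for every $m\in M$, i.e.\ $\alpha(m)\in(N':\beta)$ for every $m$, i.e.\ $\alpha\in\Hom_R(M,(N':\beta))$. Here $(\,\cdot:\beta)$ on the left-hand side means the left colon $\{x\in E\mid \beta x\in\Hom_R(M,N')\}$, which is the convention for right ideals used in Section~2. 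Substituting the hypothesis $\Hom_R(M,N)=(\Hom_R(M,N'):\beta)$ then gives the conclusion $\Hom_R(M,N)=\Hom_R(M,(N':\beta))$.

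There is no real obstacle. The only point needing care is to keep the colon conventions straight: the submodule colon $(N':\beta)$ uses the action of $\beta$ on $M$, while the right-ideal colon in $E$ is defined by multiplying by $\beta$ on the left, matching the convention $(I:c)=\{x\mid cx\in I\}$ implicit in the proof of Proposition \ref{tsa1}.
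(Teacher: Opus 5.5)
Your proof is correct and is essentially the paper's. Part (1) evaluates $1_M=f+\beta g$ (with $f\in\Hom_R(M,N')$, $g\in E$) at each $m\in M$. Part (2) is the same chain $h\in(\Hom_R(M,N'):\beta)\iff\beta\circ h\in\Hom_R(M,N')\iff h(M)\subseteq(N':\beta)\iff h\in\Hom_R(M,(N':\beta))$; you only isolate this identity before substituting the hypothesis.

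One small correction to your remark on conventions: Section~2 literally writes $(I:a)=\{x\in T\mid xa\in I\}$. The convention $\{x\mid \beta x\in I\}$ that you use is the one actually employed in the proofs of Proposition~\ref{tsa1} and of this lemma, and it is what the argument needs, but it is not the one written in Section~2.
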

\begin{proof}
$(1)$ Since $E=\Hom_R(M,N')+\beta E$, then $1_M=1_E=f+\beta g$, for some $f\in \Hom_R(M,N')$ and $g\in E$. Thus, for each $m\in M$ we have $m=f(m)+\beta(g(m))$. Consequently, $M\subseteq N'+\beta(M)$ and therefore the equality holds.\\
$(2)$ $h\in \Hom_R(M,N)\Longleftrightarrow h\in (\Hom_R(M,N'):\beta)\Longleftrightarrow \beta\circ h\in \Hom_R(M,N')\Longleftrightarrow \beta(h(M))\subseteq N'\Longleftrightarrow h(M)\subseteq (N':\beta)\Longleftrightarrow h\in \Hom_R(M, (N':\beta))$.
\end{proof}

An $R$-module $M$ is called a generator for an $R$-module $N$, if $\Hom_R(M,N)M=N$, that is, $N=\sum_{f\in \Hom_R(M,N)}\Img(f)$, see \cite{abyzov}. It is not hard to see that $M$ is a generator for $N$ if and only if $N$ is isomorphic to a quotient of $\bigoplus_{i\in I} M$, for some index set $I$. $M$ is called a generator for the category of right $R$-modules, if $M$ is a generator for ever $R$-module $N$. Equivalently, $M$ is a generator if and only if for any $R$-module $N$, and every $R$-submodules $A$ and $B$ of $N$, $\Hom_R(M,A)=\Hom_R(M,B)$ implies $A=B$. Clearly, if $M$ is a generator module, then $\Hom_R(M,A)\neq 0$ for ever nonzero $R$-module $A$; in particular, $M$ is a slightly compressible module. Next we need the following well-known lemma; we include its proof for completeness.

\begin{lem}\label{tsa4}
Let $M$ be an $R$-module. Then $M$ is a faithfully projective $R$-module if and only if $M$ is projective and generator.
\end{lem}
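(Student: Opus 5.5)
We prove the two implications separately, relying on the characterization of faithfully projective modules quoted from \cite[Theorem 2.2]{ishika}: a projective $M$ is faithfully projective exactly when $\Hom_R(M,X)\neq 0$ for every nonzero $R$-module $X$. Projectivity is assumed in both directions (faithfully projective modules are projective by definition), so the whole task is to show that, for projective $M$, the condition ``$\Hom_R(M,X)\neq 0$ for all $X\neq 0$'' is equivalent to $M$ being a generator.

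\emph{Generator $\Rightarrow$ faithfully projective.} Suppose $M$ is projective and a generator. Take $X\neq 0$. Then $X=\sum_{f\in\Hom_R(M,X)}\Img(f)$. If $\Hom_R(M,X)$ were $0$, this sum would be $0$, which is false. So $\Hom_R(M,X)\neq 0$. Since $M$ is projective, \cite[Theorem 2.2]{ishika} then says $M$ is faithfully projective.

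\emph{Faithfully projective $\Rightarrow$ generator.} Let $M$ be faithfully projective and let $X$ be any module. Put $T:=\sum_{f\in\Hom_R(M,X)}\Img(f)\subseteq X$, the trace of $M$ in $X$. Suppose $T\neq X$. Then $X/T\neq 0$, so by (2) of \cite[Theorem 2.2]{ishika} there is a nonzero $g\in\Hom_R(M,X/T)$. Since $M$ is projective and $\pi\colon X\to X/T$ is onto, $g$ lifts to some $f\in\Hom_R(M,X)$ with $\pi\circ f=g$. By definition of $T$ we have $f(M)\subseteq T$, hence $g=\pi\circ f=0$, a contradiction. Therefore $T=X$, so $M$ generates $X$. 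As $X$ was arbitrary, $M$ is a generator. This is the same lifting argument used in Proposition \ref{t1}.

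The only delicate step is the direction faithfully projective $\Rightarrow$ generator. It works because the quoted equivalence turns faithful exactness into the concrete nonvanishing condition $\Hom_R(M,X/T)\neq 0$, and projectivity then lifts a nonzero map into $X/T$ to a map into $X$.

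If one prefers to avoid \cite[Theorem 2.2]{ishika}, the alternative is to work with faithful exactness directly. For $X\neq 0$, apply $\Hom_R(M,-)$ to $0\to X\to X$ versus $0\to X\xrightarrow{0}X$: if $\Hom_R(M,X)=0$, the functor cannot distinguish the identity of $X$ from the zero map, so it would reflect a non-exact sequence as exact. This contradicts faithfulness, giving $\Hom_R(M,X)\neq 0$. Conversely, a projective generator makes $\Hom_R(M,-)$ exact and reflect nonzero modules, and this reflection is what makes it faithfully exact.
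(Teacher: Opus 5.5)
Your proof is correct and follows essentially the same route as the paper. For projective generator $\Rightarrow$ faithfully projective, both arguments reduce to the nonvanishing characterization quoted in the preliminaries. For the converse, both use the same trace argument: the paper gets the lift from surjectivity of $\pi^*$ under the exact functor $\Hom_R(M,-)$, which is exactly your projectivity lift. Your closing paragraph avoiding Ishikawa's theorem is an optional extra and is not needed.
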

\begin{proof}
If $M$ is a projective and a generator module, then by the result of \cite{ishika}, mentioned in the preliminaries, $M$ is faithfully projective. Conversely, suppose $M$ is faithfully projective. As noted in the preliminaries, $M$ is projective, see \cite{ishika}. It remains to show that $M$ is a generator. Let $N$ be an $R$-module and set $K:=\Hom_R(M,N)M$. If $K=N$, we are done. Otherwise, consider the short exact sequence $\begin{tikzcd} 0 \arrow[r] & K \arrow[r, "i"] & N  \arrow[r, "\pi"] & N/K \arrow[r] &0  \end{tikzcd}$. By faithfulness, applying $\Hom_R(M,-)$ yield the exact sequence
$$\begin{tikzcd}
0 \arrow[r] & \Hom_R(M,K) \arrow[r, "i^*"] & \Hom_R(M,N)  \arrow[r, "\pi^*"] & \Hom_R(M,N/K) \arrow[r] &0  
\end{tikzcd}$$
where $i^*(f)=i\circ f$ and $\pi^*(g)=\pi\circ g$. For any $g\in \Hom_R(M,N)$ and for any $m\in M$, we have $g(m)\in \Img(g)\subseteq K$; Thus $\pi^*=0$ and consequently $\Hom_R(M,N/K)=0$. This contradicts the fact that $M$ is faithful projective. Therefore $K=N$ and  $M$ is a generator.
\end{proof}

From the previous discussion we immediately obtain the following corollary.
 
\begin{cor}\label{tsa5}
Let $M$ be a faithfully projective $R$-module, $E=\End_R(M)$, $N$ and $N'$ are $R$-submodules of $M$. Then $N$ and $N'$ are similar submodules of $M$ if and only if $\Hom_R(M,N)$ and $\Hom_R(M,N')$ are similar right ideals of $E$.
\end{cor}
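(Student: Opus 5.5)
One direction is already done: if $N\sim N'$, then Proposition \ref{tsa1} shows that $\Hom_R(M,N)\sim\Hom_R(M,N')$. This uses only projectivity, which faithfully projective modules have.

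For the converse, suppose $\Hom_R(M,N)$ and $\Hom_R(M,N')$ are similar right ideals of $E$. I will use Cohn's characterization \cite[Proposition 1.3.6]{cohnfir}, recalled in the preliminaries. It gives $\beta\in E$ with
\[
E=\Hom_R(M,N')+\beta E \quad\text{and}\quad \Hom_R(M,N)=(\Hom_R(M,N'):\beta).
\]
Applying Lemma \ref{tsa3}(1) gives $M=N'+\beta(M)$. Applying Lemma \ref{tsa3}(2) gives $\Hom_R(M,N)=\Hom_R(M,(N':\beta))$.

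By Lemma \ref{tsa4}, $M$ is a generator. For a generator, $\Hom_R(M,A)=\Hom_R(M,B)$ forces $A=B$ for submodules $A,B$ of any module. Alternatively, this follows directly from the argument of Proposition \ref{t1}, which shows that $A\subsetneq B$ implies strict inclusion of the Hom sets. Applying this to $A=N$ and $B=(N':\beta)$ inside $M$ gives $N=(N':\beta)$.

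With $M=N'+\beta(M)$ and $N=(N':\beta)$ established, Lemma \ref{t3} (equivalently the converse direction of Proposition \ref{t4}) gives $N\sim N'$.

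The only delicate point is the order of the data in Cohn's criterion: it must be stated so that the comaximality condition involves the same right ideal that appears in the colon. As recalled in the preliminaries, $I\sim J$ iff there is $c$ with $I+cT=T$ and $J=(I:c)$. Since similarity is symmetric, I take $I=\Hom_R(M,N')$ and $J=\Hom_R(M,N)$, which matches the hypotheses of Lemma \ref{tsa3} exactly.

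The injectivity step needs care because $N$ and $(N':\beta)$ need not be comparable. I handle this with the generator characterization rather than Proposition \ref{t1}: $N=\Hom_R(M,N)M=\Hom_R(M,(N':\beta))M=(N':\beta)$. Here the outer equalities use that $M$ generates every submodule of itself, which holds because $M$ is a generator.
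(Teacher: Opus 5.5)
Your proposal is correct and follows the paper's proof essentially step for step. The forward direction is Proposition \ref{tsa1}. For the converse you use Cohn's criterion with the same ordering of data, then Lemma \ref{tsa3}, then the generator property from Lemma \ref{tsa4} to obtain $N=(N':\beta)$, and finally Lemma \ref{t3}/Proposition \ref{t4}. Your explicit computation $N=\Hom_R(M,N)M=\Hom_R(M,(N':\beta))M=(N':\beta)$ correctly handles the non-comparability you flagged. The Proposition \ref{t1} route would also work if you apply it to $N\cap(N':\beta)\subseteq N$, using $\Hom_R(M,N\cap X)=\Hom_R(M,N)\cap\Hom_R(M,X)$.
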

\begin{proof}
If $N$ and $N'$ are similar in $M$, then we are done by Theorem \ref{tsa1}, because every faithfully projective module is projective. Conversely, assume that $\Hom_R(M,N)$ and $\Hom_R(M,N')$ are similar right ideals of $E$. Hence there exists $\beta\in E$ such that $E=\Hom_R(M,N')+\beta E$ and $\Hom_R(M,N)=(\Hom_R(M,N'):\beta)$. By Lemma \ref{tsa3}, we conclude that $M=N'+\beta(M)$ and  $\Hom_R(M,N)=\Hom_R(M,(N':\beta))$. The latter equality implies that $N=(N':\beta)$, for $M$ is a generator by Lemma \ref{tsa4}. Consequently, $N$ and $N'$ are similar by Proposition \ref{t4}. 
\end{proof}

In the proposition below, we establish another fact for fully invariant maximal submodules of projective modules.

\begin{prop}\label{t12}
Let $M$ be a projective $R$-module, $E=\End_R(M)$ and $N\in \Max(M)$. Then either $|\Max(M)|\geq 3$ or $\Hom_R(M,N)=(N:_Ex):=\{g\in E|\ g(x)\in N\}$ for every $x\in M\setminus N$.
\end{prop}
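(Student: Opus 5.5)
Fix $x\in M\setminus N$. The plan is to prove the two inclusions separately, with one of them depending on a dichotomy.

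The inclusion $\Hom_R(M,N)\subseteq (N:_Ex)$ always holds: if $g(M)\subseteq N$, then in particular $g(x)\in N$. Note also that $(N:_Ex)$ is a right ideal of $E$ only if $x$ is fixed appropriately; in fact it is a \emph{left} $\mathbb{I}(N)$-set. Rather than lattice arguments, I will argue directly. By Theorem \ref{t8}, $A:=\Hom_R(M,N)$ is a maximal right ideal, but $(N:_Ex)$ need not be a right ideal, so maximality will not be invoked. Instead I split according to whether $N$ is fully invariant.

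Case 1: $N$ is not fully invariant. Then Theorem \ref{t5} gives $|\Max(M)|\geq 1+|\mathbb{E}(N)|\geq 3$, and the first alternative holds.

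Case 2: $N$ is fully invariant, i.e.\ $\mathbb{I}(N)=E$. Take $g\in E$ with $g(x)\in N$; we must show $g(M)\subseteq N$. Since $x\notin N$ and $N$ is maximal, $M=N+xR$. Because $g\in\mathbb{I}(N)$, we have $g(N)\subseteq N$. Also $g(xR)=g(x)R\subseteq N$. Hence $g(M)=g(N)+g(x)R\subseteq N$, so $g\in A$. This gives $(N:_Ex)\subseteq A$, and therefore equality.

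So the proof reduces to Theorem \ref{t5} together with the one-line computation $M=N+xR$. The only point needing care is the case split. The statement asserts "either $|\Max(M)|\geq 3$ or the equality holds", and whenever $N$ is not fully invariant the first alternative follows from Theorem \ref{t5}, so projectivity is not really needed beyond what the context supplies. I do not expect a genuine obstacle. The step most worth double-checking is that full invariance is exactly the negation of the hypothesis of Theorem \ref{t5}, i.e.\ $\mathbb{I}(N)=E$, which is how that theorem is phrased.
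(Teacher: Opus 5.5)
Your proof is correct, but after the common first step it takes a different route from the paper. Both arguments use Theorem~\ref{t5} in the same way to reduce to the case $\mathbb{I}(N)=E$.

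From there the paper argues inside $E$. By Theorem~\ref{t8}, which needs projectivity, $A=\Hom_R(M,N)$ is a maximal right ideal. By Proposition~\ref{t11}, $\mathbb{I}(A)=\mathbb{I}(N)=E$, so $A$ is two-sided and $E/A$ is a division ring. Hence $A$ is also a maximal left ideal. Full invariance then makes $(N:_Ex)$ a left ideal of $E$, since $f(g(x))\in f(N)\subseteq N$. Being proper and containing $A$, it must equal $A$.

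You bypass all of this with the decomposition $M=N+xR$ and the one-line computation $g(M)=g(N)+g(x)R\subseteq N$. Your route is shorter and more general. Theorem~\ref{t5} is stated for arbitrary modules, and your Case~2 computation uses nothing about $M$. So the proposition holds with the projectivity hypothesis dropped entirely. This is a stronger conclusion than your remark that projectivity is ``not really needed beyond what the context supplies.''

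What the paper's route buys is extra structure in the fully invariant case: $A$ is simultaneously a maximal right, maximal left and maximal two-sided ideal of $E$, with $E/A$ a division ring. That fits the paper's theme of transferring $\Max(M)$ into the ideal theory of $E$.

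Your aside about when $(N:_Ex)$ is a right ideal is muddled, but it plays no role in your argument. The relevant fact is that $(N:_Ex)$ is always a left $\mathbb{I}(N)$-submodule of $E$, which is exactly what the paper exploits.
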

\begin{proof}
Assume that $|\Max(M)|\leq 2$. Hence by Theorem \ref{t5}, $\mathbb{I}(N)=E$, i.e., for each $f\in E$, we have $f(N)\subseteq N$. Since $M$ is a projective $R$-module and $N$ is a maximal submodule of $M$, by Theorem \ref{t8} we deduce that $A:=\Hom_R(M,N)$ is a maximal right ideal of $E$. From Theorem \ref{t11}, we obtain that $\mathbb{I}(A)=\mathbb{I}(N)$ and therefore $\mathbb{I}(A)=E$. Hence $A$ is a maximal ideal of $E$ (i.e., $E/A$ is a division ring). Now note that $A=\Hom_R(M,N)\subseteq (N:_Ex)\subsetneq E$ (note, $x\in M\setminus N$ and $1_M\notin (N:_Ex)$). We claim that $(N:_Ex)$ is a left ideal of $E$. Since $N$ is a submodule of $M$, it is obvious that $(N:_Ex)$ is a subgroup of $(E,+)$. Now suppose that $f\in E$ and $g\in (N:_Ex)$. It follows that $g(x)\in N$ and since $f\in E=\mathbb{I}(N)$, we infer that $f(g(x))\in f(N)\subseteq N$. Consequently $f\circ g\in (N:_Ex)$. Thus $(N:_Ex)$ is a proper left ideal of $E$ which contains $A$. This immediately implies that $A=(N:_Ex)$.
\end{proof}

For the next results, we need the following from \cite[Section 0.6]{cohnfir}. We present the proof for completeness. 

\begin{lem}\label{t13}
Let $M$ be an $R$-module, $N$ be an $R$-submodule of $M$ and $A:=\Hom_R(M,N)$. Then the ring $\mathbb{I}(N)/A$ embeds in $\End_R(M/N)$. Moreover, if in addition, $M$ is projective then $\mathbb{I}(N)/A\cong \End_R(M/N)$, as rings. 
\end{lem}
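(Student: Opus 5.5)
The plan is to construct the obvious ring homomorphism $\Phi:\mathbb{I}(N)\longrightarrow \End_R(M/N)$ that sends an endomorphism preserving $N$ to the map it induces on the quotient, and then to compute its kernel and, under projectivity, its image. For $\beta\in\mathbb{I}(N)$ we set $\Phi(\beta)=\bar\beta$, where $\bar\beta(m+N)=\beta(m)+N$.

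\textbf{Well-definedness and multiplicativity.} The map $\bar\beta$ is well defined exactly because $\beta(N)\subseteq N$. It is clearly $R$-linear. A routine check gives $\Phi(\beta_1+\beta_2)=\Phi(\beta_1)+\Phi(\beta_2)$, $\Phi(\beta_1\circ\beta_2)=\Phi(\beta_1)\circ\Phi(\beta_2)$ and $\Phi(1_M)=1_{M/N}$. Hence $\Phi$ is a unital ring homomorphism.

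\textbf{Kernel.} We have $\Phi(\beta)=0$ iff $\beta(m)\in N$ for all $m\in M$, i.e.\ iff $\beta\in\Hom_R(M,N)=A$. So $\Ker\Phi=A$, and therefore $\Phi$ induces an injective ring homomorphism $\mathbb{I}(N)/A\hookrightarrow\End_R(M/N)$. This proves the first assertion, with no hypothesis on $M$.

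\textbf{Surjectivity when $M$ is projective.} This is the only step with content, though it is still a standard lifting argument in the same spirit as Proposition \ref{t4}. Let $\psi\in\End_R(M/N)$ and let $\pi:M\to M/N$ be the canonical surjection. Since $M$ is projective and $\pi$ is onto, the map $\psi\circ\pi:M\to M/N$ lifts to some $\beta\in E$ with $\pi\circ\beta=\psi\circ\pi$. For $n\in N$ this gives $\beta(n)+N=\psi(\pi(n))=\psi(0)=0$, so $\beta(N)\subseteq N$ and $\beta\in\mathbb{I}(N)$. Moreover $\Phi(\beta)(m+N)=\beta(m)+N=\psi(m+N)$, so $\Phi(\beta)=\psi$. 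Thus $\Phi$ is onto, and by the first isomorphism theorem $\mathbb{I}(N)/A\cong\End_R(M/N)$ as rings.

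There is no real obstacle here. The only point needing care is checking that the lift $\beta$ automatically lies in the idealizer, and this follows from $\psi(0)=0$ as shown above.
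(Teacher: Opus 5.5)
Your proposal is correct and follows essentially the same route as the paper: you take the induced-map homomorphism $\mathbb{I}(N)\to\End_R(M/N)$, show its kernel is $A$, and obtain surjectivity by lifting along $\pi$ using projectivity. Your version is slightly more careful than the paper's. You state the lifting condition correctly as $\pi\circ\beta=\psi\circ\pi$, where the paper writes $\pi\circ\alpha=g$, and you explicitly check that the lift lies in $\mathbb{I}(N)$, which the paper leaves implicit.
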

\begin{proof}
Define $\phi:\mathbb{I}(N)\longrightarrow \End_R(M/N)$ by $\phi(\alpha)=f_{\alpha}$, where $f_{\alpha}(m+N)=\alpha(m)+N$, for each $m\in M$. It is easy to check that $\phi$ is a ring homomorphism. Hence it suffices to show that $\Ker(\phi)=A$. Let $\alpha\in\mathbb{I}(N)$, then $\phi(\alpha)=0$ $\Longleftrightarrow$ $f_{\alpha}=0$ $\Longleftrightarrow$ $\alpha(M)+N=0$ $\Longleftrightarrow$ $\alpha(M)\subseteq N$ $\Longleftrightarrow$ $\alpha\in A$. Thus $\Ker(\phi)=A$ and therefore $\mathbb{I}(N)/A$ embeds in $\End_R(M/N)$. For the last part, suppose that $M$ is a projective and $\bar{\phi}$ is the $R$-module monomorphism which is induced by $\phi$. We prove that $\bar{\phi}$ is onto and therefore is a ring isomorphism. To this end, let $g\in \End_R(M/N)$. Consequently, we obtain the following diagram:
$$
\begin{tikzcd}
0 \arrow[r] & N  \arrow[r, hook, "i"] & M \arrow[r, "\pi"] \arrow[d, dashed, "\alpha"] & \frac{M}{N} \arrow[r] \arrow[d, "g"] & 0 \\
0 \arrow[r] & N \arrow[r, hook, "i"] & M  \arrow[r, "\pi"] & \frac{M}{N} \arrow[r] & 0   
\end{tikzcd}
$$
Since $M$ is a projective $R$-module, we infer that there exists an $R$-module homomorphism $\alpha:M\longrightarrow M$ such that $\pi\circ\alpha=g$. It follows that $g(m)=\alpha(m)+N$, for each $m\in M$. This implies that $\phi(\alpha)=g$. Thus $\bar{\phi}$ is onto, as desired.
\end{proof}

Now we have the following immediate corollaries.

\begin{cor}\label{t14}
Let $M$ be a projective module, $N\in \Max(M)$ and $A=\Hom_R(M,N)$. Then $\mathbb{I}(N)/A\cong \End_R(M/N)$ is a division ring.
\end{cor}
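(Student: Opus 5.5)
The plan is to combine Lemma \ref{t13} with Schur's Lemma. Since $M$ is projective, Lemma \ref{t13} gives a ring isomorphism $\mathbb{I}(N)/A\cong \End_R(M/N)$, induced by $\alpha\mapsto f_\alpha$ with $f_\alpha(m+N)=\alpha(m)+N$. So it remains only to show that $\End_R(M/N)$ is a division ring.

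Because $N\in\Max(M)$, the quotient $M/N$ is a simple right $R$-module, and in particular it is nonzero. Take any nonzero $f\in\End_R(M/N)$. Its kernel is a proper submodule of $M/N$, hence zero, and its image is a nonzero submodule, hence all of $M/N$. Thus $f$ is bijective and its set-theoretic inverse is again $R$-linear, so $f$ is a unit. The ring $\End_R(M/N)$ is also nonzero, since $1_{M/N}\neq 0$. Therefore $\End_R(M/N)$ is a division ring, and via the isomorphism so is $\mathbb{I}(N)/A$.

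There is no real obstacle here. The only point to watch is that $A\neq\mathbb{I}(N)$, so that the quotient is a nonzero ring. This holds because $1_M\in\mathbb{I}(N)\setminus A$, as $N$ is proper. It is also automatic from the isomorphism with the nonzero ring $\End_R(M/N)$.

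As a cross-check, Proposition \ref{t11} and Theorem \ref{t8} give the same conclusion by a different route: $A$ is a maximal right ideal of $E$ and $\mathbb{I}(N)=\mathbb{I}(A)$. The classical fact that $\mathbb{I}(A)/A\cong\End_E(E/A)$, applied to the simple $E$-module $E/A$, then again yields a division ring by Schur.
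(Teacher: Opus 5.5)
Your proof is correct and follows the paper's argument: the paper presents this corollary as an immediate consequence of Lemma~\ref{t13} (the ring isomorphism $\mathbb{I}(N)/A\cong\End_R(M/N)$ for projective $M$) together with Schur's Lemma applied to the simple module $M/N$. Your cross-check via Theorem~\ref{t8} and Proposition~\ref{t11} is the same chain of isomorphisms the paper records in Remark~\ref{t16}.
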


\begin{cor}\label{t15}
Let $M$ be a projective $R$-module. If $N$ and $N'$ are similar maximal $R$-submodules of $M$, then $\mathbb{E}(N)\cong\mathbb{E}(N')$.
\end{cor}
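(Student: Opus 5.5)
The plan is to deduce this directly from Lemma \ref{t13}, without any new projectivity argument. By definition, $\mathbb{E}(N)=\mathbb{I}(N)/\Hom_R(M,N)$ and $\mathbb{E}(N')=\mathbb{I}(N')/\Hom_R(M,N')$. Since $M$ is projective, the second part of Lemma \ref{t13} gives ring isomorphisms
$$\mathbb{E}(N)\cong \End_R(M/N)\quad\text{and}\quad \mathbb{E}(N')\cong \End_R(M/N').$$
Here I will use the full-invariance-free form of Lemma \ref{t13}, which holds for any submodule, so maximality is not even needed for this step.

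Next, similarity $N\sim N'$ means by definition that there is an $R$-module isomorphism $\phi:M/N\to M/N'$. Conjugation $g\mapsto \phi\circ g\circ\phi^{-1}$ is then a ring isomorphism $\End_R(M/N)\to\End_R(M/N')$. It is additive and multiplicative, it sends $1$ to $1$, and its inverse is conjugation by $\phi^{-1}$. Composing the three isomorphisms yields $\mathbb{E}(N)\cong\mathbb{E}(N')$.

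By Corollary \ref{t14}, both eigenrings are moreover division rings, though this is not needed for the claim itself.

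There is no real obstacle. The only point to check is that the isomorphism in Lemma \ref{t13} is one of rings and not merely of abelian groups. That is exactly what the lemma states: $\phi(\alpha)=f_\alpha$ is multiplicative because $f_{\alpha\beta}=f_\alpha f_\beta$ on $M/N$.
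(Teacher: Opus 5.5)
Your proof is correct and is the argument the paper intends: the paper states this corollary as an immediate consequence of Lemma \ref{t13}, combining $\mathbb{E}(N)\cong\End_R(M/N)$ and $\mathbb{E}(N')\cong\End_R(M/N')$ with the ring isomorphism induced by $M/N\cong M/N'$. Your remark that maximality is not needed is also right, because the lifting argument in Lemma \ref{t13} works for any submodule of a projective module.
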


\begin{rem}\label{t16}
Let $M$ be a projective $R$-module, $N$ be a maximal $R$-submodule of $M$, $A=\Hom_R(M,N)$ and $E=\End_R(M)$. Then
$$\End_R(M/N)\cong\mathbb{I}(N)/A=\mathbb{E}(N)=\mathbb{E}(A)=\mathbb{I}(A)/A\cong \End_E(E/A).$$
\end{rem}


We close this paper with some observations and corollaries for the set of maximal right ideals of rings. It is clear that for any left ideal $I$ of a ring $T$ and any $c\in T$, we have $(I:c)=T$ if and only if $c\in I$. Moreover, $I\subseteq (I:c)$ holds if and only if $c\in\mathbb{I}(I)$. In particular, if $M$ is a maximal left ideal of $T$, these observations immediately imply that $(M:c)=M$ if and only if $c\in \mathbb{I}(M)\setminus M$. Consequently, $[M]=\{M\}\cup\{(M:c)\ |\ c\in T\setminus\mathbb{I}(M)\}$. Before starting our main result (which present a lower bound for $|[M]|$), we recall that, as noted earlier, for any $N\in [M]$, we have $|\mathbb{I}(M)/M|=|\mathbb{I}(N)/N|$. We now present the following theorem.

\begin{thm}\label{pt1}
Let $T$ be a ring and $M\in \Maxr(T)$ is not an ideal of $T$ (i.e., $M$ is not a maximal ideal of $T$). Then $|[M]|\geq |\mathbb{I}(M)/M|+1$. In particular, $|\Maxr(T)\setminus \Max(T)|\geq |\mathbb{I}(M)/M|+1$.
\end{thm}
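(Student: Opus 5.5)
The plan is to deduce this from Theorem \ref{t5}, applied to the module $M'=T_T$ (the ring regarded as a right module over itself). I write $M'$ for the module to avoid a clash with the maximal right ideal $M$. Under the identification $\End_T(T_T)\cong T$, where $t\in T$ acts by left multiplication $\lambda_t(x)=tx$, the paper's notions for submodules specialize to the classical ones for right ideals.

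First I check the dictionary. The maximal submodules of $T_T$ are exactly the maximal right ideals of $T$. For a right ideal $I$ one has $\Hom_T(T,I)=\{\lambda_t \mid tT\subseteq I\}=\{\lambda_t\mid t\in I\}$, so it corresponds to $I$. The module idealizer $\{\lambda_t \mid tI\subseteq I\}$ corresponds to the ring idealizer $\mathbb{I}(I)$. Hence the module eigenring $\mathbb{E}(I)$ is isomorphic to $\mathbb{I}(I)/I$. Next, $(I:\lambda_c)=\{x\mid cx\in I\}$. This is the ``$(I:c)$'' with $c$ on the left, which is the correct one for right ideals, since $T/I\cong T/J$ via $x+J\mapsto cx+I$. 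Finally, similarity of submodules, $T/I\cong T/J$ as right $T$-modules, is literally similarity of right ideals. Thus the similarity class $[M]$ in the ring sense coincides with the set of submodules of $T_T$ similar to $M$.

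Now take $M\in\Maxr(T)$ that is not a two-sided ideal. Then some $t\in T$ satisfies $tM\nsubseteq M$, so $\lambda_t\notin\mathbb{I}(M)$; that is, $M$ is not fully invariant in $T_T$. Theorem \ref{t5} then says $M$ is similar to at least $1+|\mathbb{E}(M)|=1+|\mathbb{I}(M)/M|$ distinct maximal submodules of $T_T$, which gives $|[M]|\geq|\mathbb{I}(M)/M|+1$.

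For the ``in particular'' statement I use fact (2) of the preliminaries. Since $M$ is not an ideal, no member of $[M]$ is a two-sided ideal, so $[M]\subseteq\Maxr(T)\setminus\Max(T)$ and the second inequality follows.

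The only delicate step is the side convention for $(I:c)$. The paper writes $(I:a)=\{x\mid xa\in I\}$, which is the left-ideal version, so I will state explicitly that for right ideals the relevant colon is $\{x\mid cx\in I\}$. Alternatively, I can bypass colons entirely and cite Theorem \ref{t5} together with the module definition of similarity, which avoids the issue.
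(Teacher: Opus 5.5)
Your argument is correct, and it is exactly the reduction the paper mentions (``we can prove with application of Theorem \ref{t5}''). The paper instead writes out a direct proof that is just the proof of Theorem \ref{t5} transcribed to $T$: fix $x\in T\setminus\mathbb{I}(M)$ and show that $c+M\mapsto (M:x+c)$ is injective into $[M]\setminus\{M\}$. Your explicit dictionary via $t\mapsto\lambda_t$ has one advantage: you use the right-ideal colon $\{x\mid cx\in M\}$, whereas the paper's direct computation ($tx+ta\in M$) tacitly uses the left-ideal convention despite stating the theorem for $\Maxr(T)$.
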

\begin{proof}
Although we can prove with application of Theorem \ref{t5}, but we give a direct proof by the above comments. Let $R=\mathbb{I}(M)$ and $D=R/M$. Since $M$ is not an ideal of $T$ we infer that $R$ is a proper subring of $T$. Take a fixed element $x\in T\setminus R$. We claim that the function $c+M\longmapsto (M:x+c)$ is a one-one function from $D$ to $[M]\setminus \{M\}$. Since $x\in T\setminus R$ and $c\in R$ we deduce that $x+c\in T\setminus R$ and therefore $(M:x+c)\in [M]\setminus \{M\}$, by the previous observation. Hence it remain to show that the function is one-one. Hence assume that $a,b\in R$ and $(M:x+a)=(M:x+b)$, we must show that $a+M=b+M$, i.e., $a-b\in M$. Let $t\in (M:x+a)=(M:x+b)$, then we conclude that $tx+ta, tx+tb\in M$. It follows that $t(a-b)\in M$, i.e., $(M:x+a)\subseteq (M:a-b)$. Now if $a-b\notin M$, then we obtain that $a-b\in R\setminus M$, a fortiori $(M:a-b)=M$. This implies that $(M:x+a)\subseteq M$ and consequently by maximality of $(M:x+a)$ we deduce that $(M:x+a)=M$ which is absurd, because $x+a\in T\setminus R$. Thus $a-b\in M$, as desired.
\end{proof}

An immediate consequence of the previous theorem is that if $T$ is a ring and $\Maxl(T)\setminus \Max(T)\neq\emptyset$, then $|\Maxl(T)\setminus \Max(T)|\geq 3$. In other words, if a ring $T$ possesses a maximal left ideal that is not a two-ideal, then $T$ has at least three distinct maximal left ideals that are not two-ideals.

\begin{cor}\label{pt2}
Let $T$ be a $K$-algebra over a field $K$. Then either $T$ is a right quasi duo ring or $T$ has at least $|K|+1$ maximal right ideals which are not ideals of $T$ (i.e., $|\Maxr(T)\setminus \Max(T)|\geq |K|+1$).
\end{cor}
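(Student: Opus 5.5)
The plan is to reduce the statement to Theorem \ref{pt1} and then use Lemma \ref{t6} to bound the eigenring from below by $|K|$. Suppose $T$ is not right quasi-duo. Then some maximal right ideal $M$ of $T$ fails to be a two-sided ideal. By Theorem \ref{pt1}, $|\Maxr(T)\setminus\Max(T)|\geq |[M]|\geq |\mathbb{I}(M)/M|+1$. It therefore suffices to show $|\mathbb{I}(M)/M|\geq |K|$.

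For this bound I would use the module-theoretic version of the eigenring. Regard $T_T$ as a (free, hence projective) right $T$-module with $E=\End_T(T_T)\cong T$, where $t\in T$ corresponds to left multiplication $\lambda_t$. Under this identification the submodule $M\subseteq T_T$ has $\Hom_T(T,M)=\{\lambda_t: tT\subseteq M\}=\{\lambda_t: t\in M\}$, which corresponds to $M$. Likewise $\mathbb{I}(M)$ in the module sense, namely $\{\lambda_t: tM\subseteq M\}$, is exactly the ring-theoretic idealizer $\mathbb{I}(M)$. Hence the module eigenring $\mathbb{E}(M)$ equals $\mathbb{I}(M)/M$. Since $M$ is a proper submodule of $T_T$ and $T$ is a $K$-algebra, Lemma \ref{t6} embeds $K$ into $\mathbb{E}(M)=\mathbb{I}(M)/M$, which gives $|\mathbb{I}(M)/M|\geq|K|$.

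A direct check is also available and avoids the identification. The map $K\to T$, $k\mapsto k\cdot1$, lands in the center of $T$, so it lies in $\mathbb{I}(M)$. Its composite with $\mathbb{I}(M)\to\mathbb{I}(M)/M$ is a ring homomorphism from the field $K$ to a nonzero ring, because $1\notin M$. Its kernel is an ideal of $K$ not containing $1$, so it is zero and the map is injective. Alternatively, one can apply Corollary \ref{t7} to $M=T_T$: since $|\Max(T_T)|=|\Maxr(T)|$, this route also needs Theorem \ref{pt1}'s refinement that the counted ideals are not two-sided, so citing Theorem \ref{pt1} directly is cleaner.

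The only step needing any care is confirming that the idealizer and eigenring notions match the ring-theoretic $\mathbb{I}(M)/M$ used in Theorem \ref{pt1}. The direct central-embedding argument handles this without reference to the module picture. Combining the two inequalities gives $|\Maxr(T)\setminus\Max(T)|\geq|K|+1$ whenever $T$ is not right quasi-duo, which is the claimed dichotomy.
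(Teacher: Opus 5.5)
Your proposal is correct and follows the paper's route: reduce to Theorem \ref{pt1} and embed $K$ into $\mathbb{I}(M)/M$ through central scalars. This is exactly your ``direct check''; the paper's one-line claim ``$K\cap M=0$'' is your observation that a ring map from a field into the nonzero ring $\mathbb{I}(M)/M$ is injective, and the Lemma \ref{t6} detour through $\End_T(T_T)\cong T$ is valid but not needed.
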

\begin{proof}
Assume that $T$ is not a right quasi duo ring and $M$ is a maximal right ideal of $T$ which is not an ideal of $T$. Since $K$ is a field which is contained in the center of $T$, we conclude that $K\subseteq R:=\mathbb{I}(M)$ and clearly $K\cap M=0$. Thus $K$ embeds in $R/M$ and hence we are done by Theorem \ref{pt1}.
\end{proof}

Hence we have the following immediate fact.

\begin{cor}\label{pt3}
Let $T$ be an algebra over an infinite field $K$. Then either $T$ is a right quasi duo ring or $\Maxr(T)\setminus \Max(T)$ is infinite (in fact, $|\Maxr(T)\setminus \Max(T)|\geq |K|$).
\end{cor}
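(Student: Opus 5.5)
This is a direct consequence of Corollary \ref{pt2}, so the plan is a cardinal-arithmetic reduction. Suppose $T$ is not right quasi-duo. Then some maximal right ideal $M$ of $T$ is not an ideal. Corollary \ref{pt2} then gives $|\Maxr(T)\setminus \Max(T)|\geq |K|+1$.

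Since $K$ is infinite, $|K|+1=|K|$. This yields the stated bound $|\Maxr(T)\setminus \Max(T)|\geq |K|$, and in particular the set $\Maxr(T)\setminus\Max(T)$ is infinite.

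It is worth checking the one hypothesis that Corollary \ref{pt2} relies on, namely that $K$ embeds in $\mathbb{I}(M)/M$. An element $k\in K$ is central in $T$, so $kM=Mk\subseteq M$, which gives $K\subseteq\mathbb{I}(M)$. Moreover $K\cap M=0$, because any nonzero $k\in K$ is a unit of $T$ and $M$ is proper. Both facts were already used in the proof of Corollary \ref{pt2}.

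I do not expect any real obstacle. The only point needing care is the identity $|K|+1=|K|$ for infinite cardinals; everything else is immediate from Theorem \ref{pt1} through Corollary \ref{pt2}.
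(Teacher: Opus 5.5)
Your proposal is correct and matches the paper, which also treats this as an immediate consequence of Corollary \ref{pt2}: if $T$ is not right quasi-duo, then $|\Maxr(T)\setminus\Max(T)|\geq |K|+1\geq |K|$, which is infinite. You do not actually need the identity $|K|+1=|K|$, since the inequality $|K|+1\geq|K|$ already gives the stated bound; the identity only shows that passing from $|K|+1$ to $|K|$ loses nothing.
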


Thus, if $T$ is a ring and $\Maxl(T)\setminus \Max(T)$ is finite, then for each $M\in \Maxl(T)\setminus \Max(T)$, we deduce that $\mathbb{I}(M)/M$ is a finite field. Moreover, if additionally, $T$ is a $K$-algebra over a field $K$ and $\Maxl(T)\setminus \Max(T)$ is finite, then $K$ must be a finite field (note that for any maximal left ideal $M$ of $T$, the field $\mathbb{I}(M)/M$ contains a copy of $K$).\\

Let $\Maxl(T)\setminus \Max(T)=\bigcup_{i\in I}[M_i]$ be the partition of $\Maxl(T)\setminus \Max(T)$ into similarity classes. By Theorem \ref{pt1}, we obtain that  $|\Maxl(T)\setminus \Max(T)|\geq |I|+\sum_{i\in I}|\mathbb{I}(M)/M|$.\\

It is clear from the above results that if $K$ is a field and $n>1$, then $|\Maxl(\mathbb{M}_n(K))|\geq |K|+1$ and $|\Maxr(\mathbb{M}_n(K))|\geq |K|+1$ (note, $\mathbb{M}_n(K)$ is a simple $K$-algebra and therefore each maximal left/right ideal of $\mathbb{M}_n(K)$ is not an ideal of $\mathbb{M}_n(K)$). In particular, if $K$ is an infinite field, then $\mathbb{M}_n(K)$ has at least $|K|$-many maximal left/right ideals (also see Remark \ref{pt5a}). Next we employ the notation and results of \cite{ston} to show that if $R$ is an infinite division ring and $n>1$, then $T=\mathbb{M}_n(R)$ possesses infinitely many maximal left/right ideals. Recall that if $M$ is a maximal left ideal of a ring $R$ and $T=\mathbb{M}_n(R)$, then for each $u\in R^n\setminus M^n$, the set $D(M,u):=\{X\in T\ |\ Xu\in M^n\}$ is a maximal left ideal of $T$; indeed, $T/D(M,u)\cong (R/M)^n$ as left $T$-modules. Moreover, $\Maxl(T)=\{D(M,u)\ |\ M\in \Maxl(R),\ u\in R^n\setminus M^n\}$, see \cite[Theorem 1.2]{ston}. In particular, when $R$ is a division ring, we obtain that $\Maxl(\mathbb{M}_n(R))=\{D(0,u)\ |\ 0\neq u\in R^n\}$.\\

Hence to show that $\Maxl(\mathbb{M}_n(R))$ is infinite for an infinite division ring $R$, it suffices to produce infinitely many nonzero vectors $u\in R^n$ for which $D(0,u)$ are distinct. By \cite[Proposition 2.3]{ston}, if $M$ is a maximal left ideal of a ring $R$, $S=\mathbb{I}(M)$ its idealizer of $M$ in $R$, and $u,v\in S^n$, then $D(M,u)=D(M,v)$ (in $T=\mathbb{M}_n(R)$) if and only if there exists $c\in S\setminus M$ such that $v\equiv uc\ \mod(M)$, i.e., $v-uc\in M^n$. In particular, for a division ring $R$ the only possible maximal left ideals is $M=0$, so that $S=R$; consequently, for $u,v\in R^n$, we have $D(0,u)=D(0,v)$ if and only if there exists $0\neq c\in R$ such that $v=uc$. In other words $v$ and $u$ are right-parallel vectors in the right $R$-module $R^n$. Clearly, if $R$ is an infinite division ring and $n>1$, then as right $R$-module $R^n$ contains infinitely many right non-parallel vectors. This observations yield the following quick corollary.

\begin{cor}\label{pt4}
Let $R$ be an infinite division ring and $n>1$. Then $T=\mathbb{M}_n(R)$ has infinitely many maximal left (resp. right) ideals.
\end{cor}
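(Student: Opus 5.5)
The plan is to show that the nonzero vectors of $R^n$ fall into infinitely many right-parallel classes. By the discussion just before the statement, this yields infinitely many maximal left ideals of $T=\mathbb{M}_n(R)$. The right-ideal case will then follow by a symmetric argument, or by passing to the opposite ring.

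\emph{Left ideals.} By \cite[Theorem 1.2]{ston}, when $R$ is a division ring we have $\Maxl(T)=\{D(0,u)\mid 0\neq u\in R^n\}$. By \cite[Proposition 2.3]{ston}, $D(0,u)=D(0,v)$ holds exactly when $v=uc$ for some $0\neq c\in R$. So it suffices to produce infinitely many nonzero vectors of $R^n$, no two of which are right-parallel.

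Since $n>1$, for each $a\in R$ consider $u_a:=(1,a,0,\ldots,0)^{T}\in R^n$. Suppose $u_b=u_a c$ with $c\neq 0$. Comparing first coordinates gives $c=1$, and then comparing second coordinates gives $b=a$. Hence $a\mapsto D(0,u_a)$ is injective. Because $R$ is infinite, this gives $|\Maxl(T)|\geq |R|$, which is infinite.

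\emph{Right ideals.} There are two routes, and I expect the only real care to lie in choosing one and checking it.

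The first route uses $\mathbb{M}_n(R)^{op}\cong \mathbb{M}_n(R^{op})$, via transposition. The ring $R^{op}$ is again an infinite division ring. Maximal right ideals of $T$ correspond to maximal left ideals of $T^{op}$, so the left case applied to $R^{op}$ gives the claim.

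The second route is a direct check. The right ideals $\{X\in T\mid u^{T}X=0\}$, for $0\neq u\in R^n$ regarded as a row vector, are maximal because the corresponding quotient is the simple right module $R^n$. They are pairwise distinct for non-left-parallel row vectors, and the row vectors $(1,a,0,\ldots,0)$ again give infinitely many classes.

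Either route finishes the proof. I would use the opposite-ring argument, since it needs only the isomorphism $(\mathbb{M}_n(R))^{op}\cong\mathbb{M}_n(R^{op})$ together with the cited results.
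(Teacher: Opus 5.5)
Your proof is correct and is essentially the paper's argument: Theorem 1.2 and Proposition 2.3 of \cite{ston} reduce the left case to counting right-parallel classes of nonzero vectors in $R^n$, and your family $(1,a,0,\ldots,0)^{T}$ makes explicit what the paper dismisses with ``clearly''. For the right-ideal case, which the paper leaves to symmetry (cf.\ Remark~\ref{pt5a}), your route through $\mathbb{M}_n(R)^{\mathrm{op}}\cong\mathbb{M}_n(R^{\mathrm{op}})$ is the right one, and it is more careful than literal transposition, which is an anti-automorphism of $\mathbb{M}_n(R)$ only when $R$ is commutative.
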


\begin{rem}\label{pt5a}
Let $R$ be a ring, $n>1$, and set $T=\mathbb{M}_n(R)$. Then it is easy to see that $\Maxl(T)\cap \Max(T)=\emptyset=\Maxr(T)\cap \Max(T)$. Moreover, the map $M\longmapsto M^t$ (where $M^t:=\{m^t\ |\ m\in M\}$ is the transpose of $M$), is a one-to-one correspondence from $\Maxl(T)$ onto $\Maxr(T)$. In particular, $|\Maxl(T)|=|\Maxr(T)|$.
\end{rem}

\begin{cor}\label{pt6b}
Let $R$ be an infinite ring. Then the following hold.
\begin{enumerate}
  \item If $R$ is a prime ring with $J(R)=0$, then either $R$ is a division ring or $\Maxl(R)$ and $\Maxr(R)$ are infinite.
  \item If $R$ is a left/right primitive ring, then either $R$ is a division ring or $\Maxl(R)$ and $\Maxr(R)$ are infinite. In particular, if $R$ is a one-sided primitive ring but is not a primitive ring, then $\Maxl(R)$ and $\Maxr(R)$ are infinite.
  \item If $R$ is a simple ring, then either $R$ is a division ring or $\Maxl(R)$ and $\Maxr(R)$ are infinite.
\end{enumerate}
\end{cor}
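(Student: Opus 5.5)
We must prove Corollary~\ref{pt6b}. The common mechanism is Theorem~\ref{pt1}: if some maximal left (or right) ideal $M$ of $R$ is not two-sided, then $|[M]|\geq 1+|\mathbb{I}(M)/M|$. We also need a way to force $\mathbb{I}(M)/M$ to be infinite, or else to get infinitely many maximal one-sided ideals directly. The plan is to reduce each part to the following dichotomy for a left primitive infinite ring $R$: either $R$ is a division ring, or $R$ has infinitely many maximal left ideals.

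\textbf{Key case: $R$ left primitive and infinite, not a division ring.} Let $V$ be a faithful simple left $R$-module with $D=\End_R(V)$ a division ring; write $V\cong R/M$ for a maximal left ideal $M$, so $D\cong \mathbb{I}(M)/M$. By Jacobson density, $R$ acts densely on ${}_DV$. There are two sub-cases.
\begin{itemize}
\item If $\dim_D V=n<\infty$, density gives $R\cong \mathbb{M}_n(D)$ with $n>1$, since $R$ is not a division ring. Since $R$ is infinite, $D$ is infinite, and Corollary~\ref{pt4} gives infinitely many maximal left and right ideals.
\item If $\dim_D V=\infty$, then for each $v\neq 0$ the annihilator $\mathrm{ann}(v)$ is a maximal left ideal, and $\mathrm{ann}(v)=\mathrm{ann}(w)$ forces $w\in vD$. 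To see this, apply density to $v,w$ when they are $D$-independent: some $r$ kills $v$ but not $w$. An infinite-dimensional $V$ has infinitely many lines $vD$, hence infinitely many maximal left ideals.
\end{itemize}
Alternatively, in both sub-cases $M$ is not two-sided, because $R/M$ faithful and $M$ two-sided would give $M=0$ and make $R$ a division ring. Theorem~\ref{pt1} then gives $|[M]|\geq 1+|D|$, which settles things whenever $D$ is infinite. The infinite-dimensional sub-case covers the case of finite $D$.

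\textbf{Right ideals.} For $\Maxr(R)$ I would use a symmetric argument when $R$ is also right primitive. In general, though, I expect this step to be the main obstacle. The plan is to exploit the maximal left ideals $\mathrm{ann}(v)$: the right ideals $\{r: rV\subseteq W\}$ for hyperplanes $W$, or in the matrix case Remark~\ref{pt5a}, produce infinitely many maximal right ideals. If $R$ is not right quasi-duo, Theorem~\ref{pt1} applied on the right gives at least three; to get infinitely many, I would try to show that for a non-division primitive ring some maximal right ideal has infinite eigenring or infinitely many similar copies, mirroring the density argument on the dual side. I would first attempt it when $R$ has a minimal one-sided ideal, where socle arguments make the right side explicit, and only then handle the general case.

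\textbf{Reductions for (1) and (3).} For (3), a simple ring is left and right primitive: take any maximal left ideal; the annihilator of the simple quotient is a proper two-sided ideal, hence $0$. So (3) follows from (2). For (1), prime with $J(R)=0$ gives a family of primitive ideals $P_i$ with $\bigcap P_i=0$. If some $P_i=0$, then $R$ is primitive and (2) applies. Otherwise $R$ has infinitely many distinct primitive ideals, since a finite intersection of nonzero ideals in a prime ring is nonzero. Distinct annihilators of simple modules give non-isomorphic simple modules, hence infinitely many maximal left ideals, and the same reasoning applies on the right using $J(R)=0$ on the right. Finally, for the ``in particular'' clause of (2): a one-sided primitive ring that is not primitive cannot be a division ring, so the first alternative is excluded.
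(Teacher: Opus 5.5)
There is a genuine gap: you never prove that a left primitive ring which is not a division ring has infinitely many maximal \emph{right} ideals.

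Your density argument is correct on the left. When $\dim_D V<\infty$ it also handles both sides at once via Corollary~\ref{pt4}. But in the infinite-dimensional sub-case it only produces maximal left ideals, and your paragraph on right ideals is a list of things you would try, not an argument. The concrete suggestions do not close the gap:
\begin{itemize}
\item $\{r\in R: rV\subseteq W\}$ is a right ideal, but $R$ modulo it is the cyclic submodule of $\Hom_D(V,V/W)$ generated by the projection $V\to V/W$. Nothing forces that module to be simple when $R$ has zero socle, so these right ideals need not be maximal.
\item Theorem~\ref{pt1} applied on the right gives only $|[\mathfrak m]|\geq 1+|\mathbb{I}(\mathfrak m)/\mathfrak m|$ for a maximal right ideal $\mathfrak m$. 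That eigenring is unrelated to $D$, and you have no control over its size.
\end{itemize}
Your treatment of (1) sends the case ``some left primitive ideal is $0$'' to (2). So part (1) inherits the same gap for rings that are left but not right primitive.

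The fix is to reverse your reductions. The hypotheses of (1) are left--right symmetric, and every left or right primitive ring, and every simple ring, is prime with $J(R)=0$. So (2) and (3) follow from (1), which is how the paper proceeds. Part (1) can then be proved separately on each side, so you never need to pass from left ideals to right ideals.

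The paper's argument is short and avoids density. Suppose $\Maxr(R)=\{\mathfrak m_1,\dots,\mathfrak m_k\}$.
\begin{itemize}
\item Then $\bigcap_i\mathfrak m_i=J(R)=0$, so $R_R$ embeds in $\bigoplus_i R/\mathfrak m_i$ and has finite length.
\item Hence $R$ is right Artinian with zero radical, so it is semisimple.
\item Primeness gives $R\cong\mathbb{M}_n(D)$ with $D$ infinite, and Corollary~\ref{pt4} forces $n=1$.
\end{itemize}
The left side is symmetric.

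Your own tools also suffice once you organize them per side. On the right there are two cases. Either $0$ is a right primitive ideal, and then the mirror image of your density argument applies to a faithful simple right module. Or every right primitive ideal is nonzero; then, since these ideals intersect in $J(R)=0$ and finitely many nonzero ideals in a prime ring have nonzero intersection, there are infinitely many right primitive ideals. That gives infinitely many maximal right ideals, exactly as in your left-hand argument.
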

\begin{proof}
It suffices to prove $(1)$, the remaining statements follow immediately from $(1)$. Assume that $R$ is a prime ring with $J(R)=0$, and, for instance let $\Maxr(R)=\{\mathfrak{m}_1,\ldots,\mathfrak{m}_k\}$ is finite. Since $J(R)=0$, then $R$ embeds into $R/\mathfrak{m}_1\times\cdots\times R/\mathfrak{m}_k$. Thus $R$ is a right Artinian ring, hence $R$ is a semisimple ring. Because $R$ is a prime ring, it follows that $R\cong\mathbb{M}_n(D)$ for some natural number $n$ and some division ring $D$. The ring $D$ is infinite, because $R$ is infinite. If $n>1$, then by Corollary \ref{pt4}, $R$ possesses infinitely many maximal right ideals, a contradiction. Thus $n=1$ and $R$ is a division ring, as desired. 
\end{proof}

\begin{rem}\label{pt7c}
Let $n>1$ be a natural number, $R$ a ring and $T=\mathbb{M}_n(R)$. It is not hard to see that if $R$ has infinitely many maximal right ideals, then $T$ has infinitely many maximal right ideals (and, by Remark \ref{pt5a}, also infinitely many maximal left ideals). Likewise, if $R$ has infinitely many maximal ideals, then $T$ has infinitely many maximal left/right ideals. Observe that $\Maxl(R)\cap \Max(R)=\Maxr(R)\cap \Max(R)$. Now, if $T$ has only finitely many maximal right (or left) ideals, then from the above observations we conclude that $\Maxr(R)$, $\Maxl(R)$ and $\Max(R)$ are finite. Hence we may write:
\begin{enumerate}
  \item $\Maxr(R)=\{M_1,\ldots,M_k,P_1,\ldots,P_r\}$.
  \item $\Max_\ell(R)=\{M_1,\ldots,M_k,Q_1,\ldots,Q_s\}$.
  \item $\Maxr(R)=\{M_1,\ldots,M_k,N_1,\ldots,N_m\}$.
\end{enumerate}
Moreover, if $P$ is a right (or left) primitive ideal of $R$ and $R/P$ is infinite, then Corollary \ref{pt6b} implies that $R/P$ has infinitely many maximal right/left ideals, impossible under finiteness assumption. Therefore $R/P$ is a finite prime ring, and hence is a simple ring, that is, $P$ is a maximal ideal of $R$.\\

Therefore by Corollary \ref{pt6b} and Theorem \ref{pt1}, we obtain:
\begin{enumerate}
  \item For each $i$, the ring $R/M_i$ is a finite field.
  \item For each $i$, the ring $R/N_i$ is a finite simple ring of the form $\mathbb{M}_{n_i}(E_i)$, where $E_i$ is a finite field.
  \item For each $j$, the rings $\mathbb{I}(P_j)/P_j$ and $\mathbb{I}(Q_j)/Q_j$ are finite simple ring.
\end{enumerate}
In particular, $R/J^*(R)$ is a finite ring, where $J^*(R)=\bigcap_{M\in \Max(R)} M$.
\end{rem}

From the previous remark we now derive the following generalization of Corollary \ref{pt4}.

\begin{cor}
Let $D$ be an infinite division ring contained as a subring of a ring $R$. Then for every $n>1$, the ring $\mathbb{M}_n(R)$ has infinitely many maximal left/right ideals. Equivalently, any ring $T$ that contains  $\mathbb{M}_n(D)$ as a subring has infinitely many maximal left/right ideals.
\end{cor}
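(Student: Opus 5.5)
We argue by contradiction and reduce to the finiteness analysis of Remark \ref{pt7c}. Fix $n>1$, put $T=\mathbb{M}_n(R)$, and suppose $T$ has only finitely many maximal right ideals. By Remark \ref{pt5a}, transposition is a bijection $\Maxl(T)\to\Maxr(T)$, so it is enough to treat right ideals. Remark \ref{pt7c} then shows that $\Maxr(R)$, $\Maxl(R)$ and $\Max(R)$ are all finite. It also shows that every quotient $R/P$ with $P$ a right primitive ideal is a finite simple ring. Since $R$ is unital, $\Maxr(R)\neq\emptyset$.

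The key step uses the division ring $D\subseteq R$. Pick any $\mathfrak{m}\in\Maxr(R)$ and let $P=\rann_R(R/\mathfrak{m})$. This $P$ is a right primitive ideal, so by Remark \ref{pt7c} the ring $R/P$ is finite. Next consider $D\cap P$. It is a two-sided ideal of the division ring $D$, and it is proper because $1\notin P$. Hence $D\cap P=0$, and the composite $D\hookrightarrow R\to R/P$ is injective. An infinite $D$ would then embed in the finite ring $R/P$, which is a contradiction. So $\Maxr(T)$ is infinite, and by Remark \ref{pt5a} so is $\Maxl(T)$.

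For the equivalent formulation, let $T$ be any ring containing $\mathbb{M}_n(D)$ as a (unital) subring. The first thing I would try is to write $T\cong\mathbb{M}_n(R)$ with $R$ the centralizer of the matrix units $e_{ij}$ in $T$. The scalar copy of $D$ commutes with the $e_{ij}$, so it lies in $R$, and the first part applies. The converse direction is trivial, since $\mathbb{M}_n(R)\supseteq\mathbb{M}_n(D)$.

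I expect the main obstacle to be the matrix-unit decomposition $T\cong\mathbb{M}_n(R)$. It needs the $e_{ij}$ of $\mathbb{M}_n(D)$ to be a full set of matrix units in $T$ with $\sum e_{ii}=1_T$, which holds because subrings are unital; I would cite Lam for this standard fact. If that route proves awkward, a more robust alternative is to rerun the primitive-ideal argument directly in $T$. In $T$ one has $D\cap P=0$ for any proper ideal $P$, by the same reasoning as before, and the finiteness of primitive quotients would again come from Corollary \ref{pt6b} together with Theorem \ref{pt1}.
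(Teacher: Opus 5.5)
Your proposal is correct and follows the paper's own route: the paper derives this corollary from Remark \ref{pt7c} (every primitive quotient $R/P$ is finite) together with $D\cap P=0$, and your identification $T\cong\mathbb{M}_n(R)$, with $R$ the centralizer of the matrix units and the scalar copy of $D$ inside it, is the standard justification of the ``equivalently'' clause. Two of the remarks you lean on need repair in exactly your situation. First, if $R/P$ is an infinite division ring (e.g.\ $R=D$), then Corollary \ref{pt6b} applied to $R/P$ gives no contradiction, so the finiteness of $R/P$ must instead come from Corollary \ref{pt4} applied to the quotient $\mathbb{M}_n(R/P)$ of $T$. Second, transposition is an anti-isomorphism $\mathbb{M}_n(R)\to\mathbb{M}_n(R^{\mathrm{op}})$, not an anti-automorphism of $\mathbb{M}_n(R)$ when $R$ is noncommutative, so the claim in Remark \ref{pt5a} fails in general. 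Handle left ideals by running your right-ideal argument for $R^{\mathrm{op}}\supseteq D^{\mathrm{op}}$ instead of citing that remark.
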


\vspace{0.5cm}
\centerline{\Large{\bf Declaration of Interest Statement}}
The authors declare that they have no known competing financial interests or personal relationships that could have appeared to influence the work reported in this paper.
\vspace{0.5cm}

\vspace{0.5cm}
\centerline{\Large{\bf Acknowledgement}}
The author is grateful to the Research Council of Shahid Chamran University of Ahvaz (Ahvaz-Iran) for
financial support (Grant Number: SCU.MM1404.721)
\vspace{0.5cm}


\end{document}